\tikzstyle{nodo}=[circle,draw,fill,inner sep=0pt, minimum size=0.5*width("k")]
\tikzstyle{infinito}=[circle,inner sep=0pt,minimum size=0mm]
\newcommand\infvert{$\scriptstyle\infty$}
\newcommand\R{{\mathbb R}}
\newcommand\weak{\rightharpoonup}
\newcommand\G{\mathcal G}
\newcommand\infV{\Lambda}
\newcommand\HH{\mathcal H}
\newcommand\be{\begin{equation}}
\newcommand\ee{\end{equation}}
\newcommand\Hmu{{H^1_\mu}}
\newcommand\Vmu{{V_\mu}}
\newcommand\vv{\textsc{v}}
\newcommand\eps{\varepsilon}
\newtheorem{theorem}{Theorem}[section]
\newtheorem{lemma}[theorem]{Lemma}
\newtheorem{corollary}[theorem]{Corollary}
\theoremstyle{remark}
\newtheorem{remark}[theorem]{Remark}
\newtheorem*{remark*}{Remark}
\newtheorem{example}[theorem]{Example}
\theoremstyle{definition}
\date{}
\title{Multiple positive bound states \\ for the subcritical NLS equation on metric graphs }
\author{Riccardo Adami, Enrico Serra, Paolo Tilli \\ \ \\{\small  Dipartimento di Scienze
Matematiche ``G.L. Lagrange'', Politecnico di Torino } \\ {\small
Corso Duca degli Abruzzi, 24, 10129 Torino, Italy}}
\begin{document}

\maketitle

\begin{abstract}
We consider the Schr\"odinger equation with a subcritical focusing power
nonlinearity on a noncompact metric graph, and prove that for every finite edge
there exists a threshold value of the mass, beyond which there exists a positive
bound state achieving its maximum on that edge only. This bound
state is characterized as a minimizer of the energy functional
associated to the NLS equation, with an additional constraint (besides
the mass prescription): this requires particular care in proving that
the minimizer satisfies the Euler?Lagrange equation. As a
consequence, for a sufficiently large mass every finite edge of the
graph hosts at least one positive bound state that, owing to its
minimality property, is orbitally stable.
 \end{abstract}

\noindent{\small AMS Subject Classification: 35R02, 35Q55, 81Q35, 49J40.}
\smallskip

\noindent{\small Keywords: constrained minimization, metric graphs,
nonlinear Schr\"odinger equation, bound states.}

\section{Introduction}
Given $p\in (2,6)$, the existence of ground states for the NLS energy functional
\begin{equation}
\label{NLSe}
E (u,\G)
  =  \frac 1 2 \| u' \|^2_{L^2 (\G)}
- \frac 1 p  \| u \|^p_{L^p (\G)}
=\frac 1 2 \int_\G |u'|^2dx
-\frac 1 p \int_\G |u|^pdx
\end{equation}
on a noncompact
metric graph $\G$, under the
{\em mass constraint}
\begin{equation}
\label{mass}
\| u \|^2_{L^2 (\G)} \ = \ \mu,
\end{equation}
has been been investigated in a series of papers \cite{acfn, acfn2, ast, ast2, ast3}.
Several conditions on $\G$ and $\mu$ have been found,
which may guarantee (or, on the contrary,
rule out) the existence of absolute minimizers of $E(u,\G)$
in the mass--constrained space
\begin{equation}
\label{Hmu}
H^1_\mu(\G):=\left\{ u\in H^1(\G) :\,\Vert
u\Vert_{L^2(\G)}^2=\mu\right\}.
\end{equation}

In this paper, instead, we switch our focus from ground states to \emph{bound states},
that is, to functions $u\in H^1_\mu(\G)$ which are constrained \emph{critical points}
of the NLS energy functional, possibly without being absolute minimizers. More precisely,
given a mass $\mu>0$,
a bound state is a function $u\in \Hmu(\G)$ that satisfies the Euler--Lagrange equation
\begin{equation}
  \label{euler}
  \int_\G \left(-u'\eta'+u |u|^{p-2}\eta\right)\,dx=\lambda
\int_\G u\eta\,dx\quad
\forall\eta\in H^1(\G),
\end{equation}
where $\lambda$ is a Lagrange multiplier due to the mass constraint \eqref{mass}.
Of course, this means that $u$ solves
the nonlinear equation
\begin{equation}
\label{euleroforte} u''+u |u|^{p-2}=\lambda u
\end{equation}
on every edge  of $\G$, coupled with the homogeneous \emph{Kirchhoff
condition}
\begin{equation}
\label{kir}
\sum_{e\succ \vv}
\frac{d u}{d x_e}(\vv)=0
\end{equation}
at every vertex $\vv$ of $\G$ (the sum is extended to all edges $e$ incident at $\vv$,
see \cite{ast} for more details).
As usual, the existence of bound states is equivalent to the existence of stationary solutions of the corresponding NLS equation on
$\G$, and this becomes all the more relevant if
$\G$ has no ground state.

\smallskip

Before describing in detail our main results, we recall that
a \emph{noncompact metric graph} $\G$ is
a connected metric space obtained by gluing together,
by the identification of some of their endpoints (the ``vertices'' of $\G$), a finite number of closed
line intervals (not necessarily bounded), according to the topology of
a graph, self-loops and multiple edges being allowed.
Each edge $e$, after choosing a coordinate $x_e$ on it,
can be regarded either as an interval $[0,\ell_e]$, or as a positive half-line $[0,+\infty)$ (in this case
the edge is attached to $\G$ at $x_e=0$), and the spaces $L^r(\G)$, $H^1(\G)$ etc. can be
defined in a natural way (in particular, $u\in H^1(\G)$ means that $u\in H^1(e)$
for every edge $e$ of $\G$, with the additional requirement that
$u$ is \emph{continuous} on $\G$, i.e. it has no jump at any vertex:
we refer to \cite{ast,kuchment} for more details). Endowing $\G$ with the shortest path distance,
one obtains a locally compact metric space, and
$\G$ being noncompact
is equivalent to one or more edges being unbounded:
two very special cases are $\G=\R^+$ (i.e. $\G$ consists of just one unbounded edge)
and $\G=\R$  (obtained by gluing together two
unbounded edges).

It is within this framework that we look for bound states, i.e. solutions to \eqref{euler}
belonging to the space $H^1_\mu(\G)$ defined in \eqref{Hmu}.
In this investigation, we fix an exponent $p\in (2,6)$ (the \emph{critical case}
where $p=6$ is rather special and will not be considered here,
see \cite{ast3} for results on ground states),
while we will focus our attention on the mass $\mu$ as a varying parameter of the problem.

We will prove (Theorem \ref{mainteo} and Corollary \ref{coroll}) that if $\G$ is any noncompact metric graph
and $\mu$ is large enough,
then there exist at least as many bound states of mass $\mu$ as the number of bounded edges of $\G$. This is in contrast with the case of ground states,
which on some graphs may fail to exist for every value of $\mu$ (see \cite{ast,ast2}).

Our strategy  is a purely variational one, in that bound states are obtained as solutions
of a doubly-constrained minimization procedure for
the NLS energy \eqref{NLSe}.
More precisely,
we fix a bounded edge $e$ of $\G$
(if every edge of $\G$ is a halfline, then $\G$ is a star--graph and in this case the existence of
a bound state is already known, see \cite{acfn2}), and
we minimize $E(u,\G)$
among all functions $u\geq 0$ of mass $\mu$ which achieve
their absolute maximum on the edge $e$ (see \eqref{defV}, \eqref{defVmu}). It turns out
that a minimizer (i.e. a function $u$ satisfying \eqref{minpV}) exists,
provided $\mu$ is large enough (Theorem~\ref{minesiste}). Observe that
the existence of a minimizer is nontrivial: indeed, even though
the condition of achieving the maximum on $e$ is weakly closed (see Remark~\ref{remVcl}),
the class $V_\mu$ where we seek a minimizer
is far from
being compact, due to the mass constraint \eqref{mass}. Thus, to prove that a minimizer
exists, we first prove a general result (Theorem~\ref{propgen}), that
gives quantitative lower bounds
for the NLS energy along a  sequence of functions
$u_n\weak u$ in $H^1_\mu(\G)$, according to the discrepancy $m$ between $\mu$ (the mass
of every $u_n$) and the mass of the weak limit $u$.
This can be applied to our minimization problem and, at least when $\mu$ is large, it reveals
that minimizing sequences are in fact strongly compact in $L^2$ and converge to a minimizer.
We point out that, beside the present application, Theorem~\ref{propgen}
can be applied to any minimization problem involving
the NLS energy on graphs, e.g. in the investigation of ground states (see Remark~\ref{rem23}).

At this stage a minimizer $u\in V_\mu$ satisfying \eqref{minpV} exists, yet one
would not expect $u$ to solve the Euler equation \eqref{euler} (i.e. to be
a critical point of the NLS energy subject to the mass constraint \eqref{mass}),
since
a generic variation of $u$ (albeit in a direction that preserves its mass)
might well violate
the additional condition that $u$ achieves its maximum on the edge $e$. In fact,
we can prove that, if $\mu$ is large enough, then $u$ achieves its maximum \emph{only on $e$}
(and away from branching points),
so that $u$ is strictly smaller elsewhere on $\G$ (Lemma~\ref{three}): in other words,
$u$ lies in the \emph{interior} of the additional constraint, which therefore becomes
stable under small perturbations of $u$. This, in turn, allows for arbitrary small
variations in any mass-preserving direction, which leads to the Euler equation \eqref{euler}
with a Lagrange multiplier $\lambda$ (just as if $u$ were a ground state, i.e. a minimizer
subject to the mass constraint only).

We point out that, in general, the bound states obtained in this way are not ground states,
and this is certainly so if $\G$ admits no ground state (see Example~\ref{example1}).
More generally, if $\G$ has $k$ bounded edges and $\mu$ is large, by the above procedure we
obtain at least $k$ distinct bound states (Corollary~\ref{coroll}), and not all of them
are necessarily ground states (see Example~\ref{example2}).

As far as we know, the results contained in this paper are the first to establish the existence of (many)
positive bound states by variational techniques. In particular, the fact that such bound states arise as {\em local minimizers}
of the constrained energy functional has a relevant consequence as regards the dynamics described by the Schr\"odinger Equation with an additional focusing power term:
$$
i \partial_t u (t) = - \Delta u (t) - | u (t) |^{p-2} u (t),
$$
where the symbol $\Delta$ is to be interpreted as the operator acting as the second derivative on functions satisfying Kirchhoff conditions \eqref{kir} at every vertex. It is indeed well-known (\cite{gss}) that local minimizers of the constrained energy functional are {\em orbitally stable} so that, if $u$ is any bound state described in Theorem \ref{mainteo}, then for the set
$ \{ e^{i \theta} u, \, \theta \in [ 0, 2 \pi) \}$
a stability property in the sense of Lyapunov holds. This  is relevant because,
while for ground states orbital stability is guaranteed by classical results (\cite{cl}), for excited states (namely bound states that are {\em not} ground states) it is often an open issue.
Indeed,
existence of positive excited states was already proved for the tadpole graph in \cite{cfn}, but  
the problem of their stability was left untouched (nonetheless, our results do not apply to such states). On the other hand, in \cite{nps} many non-positive excited states of the tadpole graph were found to be unstable.

Finally, we remark that  it would
be interesting to investigate the existence of sign-changing bound states. This has been done in the papers \cite{st, st2} but only in the context of localized nonlinearities, and in the already cited \cite{cfn,nps} for the particular case of the tadpole graph. Moreover,
it is an open problem to establish the existence of bound states
without the assumption that
the mass $\mu$ be large enough.

The paper can be summarized as follows: in Section 2 we give some results that work as  preparation for the main theorems: in particular, in Theorem \ref{propgen} we give a new, short proof of the beaviour of weakly convergent sequences. The core of Section 3 is Theorem \ref{minesiste}, that establishes the existence of a solution to the double-constrained minimimum problem for a sufficiently large mass. Section 4 contains  Theorem \ref{mainteo}, stating that the solutions whose existence is ensured in Theorem \ref{minesiste} are actually bound states. A remarkable technical step is achieved by Lemma \ref{three}, where we prove that the constraint is actually an open subset of the hypersurface of constant mass. Finally, in Section 5 we provide further explanations and examples in order to make the scope of the results more precise.

\section{Some basic tools (old and new)}

The purpose of this section is twofold. First, we recall some known facts and inequalities
on metric graphs which will be used throughout the paper.  Second, we prove a new result
which, we believe, may be of interest in any minimization procedure for the NLS energy on
metric graphs.

\medskip

Two basic tools in our analysis are the Gagliardo--Nirenberg
inequality
\begin{equation}
\label{GN-universal}
\forall u\in H^1(\G)\qquad
\Vert u\Vert_{L^p(\G)}^p\leq
C \Vert u\Vert_{L^2(\G)}^{\frac p 2 + 1}
\Vert u'\Vert_{L^2(\G)}^{\frac p 2 - 1}
\end{equation}
for some constant
$C$ depending only on $p$, and the $L^\infty$ estimate
\begin{equation}
\label{interpol}
\Vert u\Vert_{L^\infty(\G)}^2 \leq
2 \Vert u\Vert_{L^2(\G)}
\Vert u'\Vert_{L^2(\G)}.
\end{equation}
Both are well known when $\G=\R^+$, and
by a rearrangement argument they are proved to be valid on any
noncompact metric graph $\G$, with the same constant as on $\R^+$
(see \cite{ast2,T} for more details).

In the following we will need to
compare the achievable  energy levels on $\G$ with the ground-state energy
levels on the real line $\R$ and on the halfline $\R^+$, exploiting the fact that
in the prototypical cases
where $\G=\R$ or $\G=\R^+$ everything is known explicitly.

When $\G=\R$ (and for fixed $p\in (2,6)$) the ground states of mass $\mu$
are called \emph{solitons}, and we refer to \cite{cazenave} for more details.
Here, we just record that they
achieve a negative energy level given by
\begin{equation}
  \label{ensol}
  \min_{u\in H^1_\mu(\R)} E(u,\G)=\,-\,\theta_p \,\mu^{2\beta+1},
\end{equation}
for some constant $\theta_p>0$ and an exponent $\beta$ given by
\begin{equation}
  \label{defbeta}
\beta=\frac {p-2} {6-p}>0.
\end{equation}
Similarly, when $\G=\R^+$ the unique ground state
of mass $\mu$
is a ``half soliton'' (i.e. the restriction to $\R^+$ of
a soliton of mass $2\mu$ on $\R$), with an energy level  given by
\begin{equation}
  \label{enmsol}
  \min_{u\in H^1_\mu(\R^+)} E(u,\G)=\,-\,2^{2\beta}\theta_p \,\mu^{2\beta+1}
\end{equation}
with the same $\theta_p$ and $\beta$ as before.
We point out
that these two energy levels on $\R^+$ and $\R$
provide universal bounds for  the ground-state energy level on any noncompact
metric graph $\G$. More precisely,
it was proved in \cite{ast} that
\begin{equation}
\label{trapped2}
\,-\,2^{2\beta}\,\theta_p\, \mu^{2\beta+1}\,\leq
\inf_{u\in H^1_\mu(\G)} E(u,\G)\,\leq\, 
\,-\,\theta_p\, \mu^{2\beta+1}.
\end{equation}
Later on we will need the following lemma, which gives a lower bound
to the energy $E(w,\G)$ of a function, in terms of the number $N$ of its preimages.
When $N=2$ the result, based on a rearrangement technique,
is well known (see e.g. \cite{ast}, Proposition~3.1), while the general
case is a little more tricky, though in the same spirit.

\begin{lemma}
\label{ge3}
Let $w\in H^1(\G)$  be a nonnegative function
having at least $N$ preimages ($N\geq 1$) for almost every value, i.e.
\begin{equation}
\label{hpN}
\#\bigl\{ x\in \G\,|\,\, w(x)=t\bigr\} \geq N\quad\text{for a.e. $t\in (0,\|w\|_{L^\infty(\G)})$}.
\end{equation}
 Then
\begin{equation}
\label{toobig}
E(w,\G) \ge -\theta_p\left(\frac2N\right)^{2\beta}\nu^{2\beta+1},\qquad
\nu:=\Vert w\Vert_{L^2(\G)}^2.
\end{equation}
\end{lemma}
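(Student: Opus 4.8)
\medskip
\noindent\textbf{Proof plan.}
The plan is to exploit the hypothesis \eqref{hpN} through a monotone rearrangement that, besides equidistributing $w$, gains a factor $N$ in the Dirichlet term, and then to compare with the explicit ground--state level \eqref{enmsol} on the half-line. Set $L:=\|w\|_{L^\infty(\G)}$, let $m(t):=|\{x\in\G:\,w(x)>t\}|$ be the distribution function of $w$ (finite for every $t>0$, since $w\in L^2$), let $w^\#\colon[0,+\infty)\to[0,L]$ be the decreasing rearrangement of $w$ onto the half-line (so that $w^\#$ is equimeasurable with $w$ and $m$ is its inverse function), and finally put $\psi(s):=w^\#(Ns)$. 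The change of variable $r=Ns$ gives
\[
\|\psi\|_{L^q(\R^+)}^q=\tfrac1N\|w^\#\|_{L^q(\R^+)}^q=\tfrac1N\|w\|_{L^q(\G)}^q\quad(q=2,p),\qquad
\|\psi'\|_{L^2(\R^+)}^2=N\,\|(w^\#)'\|_{L^2(\R^+)}^2,
\]
so that $\psi$ is nonnegative with $\|\psi\|_{L^2(\R^+)}^2=\nu/N$, and $\psi\in H^1(\R^+)$ once we know $(w^\#)'\in L^2$.

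The core step --- and the one I expect to be the main obstacle --- is the refined P\'olya--Szeg\H{o} inequality
\[
\|(w^\#)'\|_{L^2(\R^+)}^2\ \le\ \frac1{N^2}\,\|w'\|_{L^2(\G)}^2 ,
\]
which sharpens the usual rearrangement inequality by the factor $N^{-2}$ and is precisely where \eqref{hpN} enters. I would prove it via the coarea formula on $\G$: for a.e.\ $t\in(0,L)$ the set $\{w=t\}$ is finite with $\ell(t):=\#\{w=t\}\ge N$, and, away from the (negligible) critical values of $w$,
\[
-m'(t)=\sum_{x\in\{w=t\}}\frac1{|w'(x)|},\qquad
g(t):=-\frac{d}{dt}\int_{\{w>t\}}|w'|^2\,dx=\sum_{x\in\{w=t\}}|w'(x)| .
\]
Cauchy--Schwarz on the finite set $\{w=t\}$ yields $\ell(t)^2\le(-m'(t))\,g(t)$, hence $g(t)\ge N^2/(-m'(t))$, and integrating in $t$ gives $\|w'\|_{L^2(\G)}^2\ge N^2\int_0^L dt/(-m'(t))$; running the same computation on $w^\#$, where $\{w^\#=t\}$ is a single point and Cauchy--Schwarz is an equality, gives $\|(w^\#)'\|_{L^2(\R^+)}^2=\int_0^L dt/(-m'(t))$, and the inequality follows. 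The delicate (but by now standard) point is the rigorous justification of the coarea identities for a merely $H^1$ function --- handling the set where $w'=0$ and the values $t$ for which $\{w=t\}$ is infinite, both negligible --- which can be carried out exactly as in \cite{ast}, Proposition~3.1, where the case $N=2$ is treated.

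Granting this, I would conclude at once. Combining the displays above,
\[
E(w,\G)=\frac12\|w'\|_{L^2(\G)}^2-\frac1p\|w\|_{L^p(\G)}^p
\ \ge\ \frac{N^2}{2}\|(w^\#)'\|_{L^2(\R^+)}^2-\frac Np\|\psi\|_{L^p(\R^+)}^p
= N\Bigl(\frac12\|\psi'\|_{L^2(\R^+)}^2-\frac1p\|\psi\|_{L^p(\R^+)}^p\Bigr)=N\,E(\psi,\R^+).
\]
Since $\psi\in H^1_{\nu/N}(\R^+)$, the lower bound \eqref{enmsol} (with mass $\nu/N$) gives $E(\psi,\R^+)\ge-2^{2\beta}\theta_p(\nu/N)^{2\beta+1}$, whence
\[
E(w,\G)\ \ge\ -N\,2^{2\beta}\theta_p\Bigl(\frac\nu N\Bigr)^{2\beta+1}
= -\Bigl(\frac2N\Bigr)^{2\beta}\theta_p\,\nu^{2\beta+1},
\]
which is \eqref{toobig}. (One may also read the whole argument as a rearrangement of $w$ onto the star graph with $N$ half-lines, whose symmetric decreasing profile is $\psi$, realizing $N$ decoupled half-solitons of mass $\nu/N$.)
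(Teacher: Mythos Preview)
Your proof is correct and follows essentially the same route as the paper: rearrange $w$ decreasingly onto $\R^+$, invoke the sharpened P\'olya--Szeg\H{o} inequality with the factor $1/N^2$, rescale by $N$ to obtain a function of mass $\nu/N$ on $\R^+$, and compare with the half-soliton energy level \eqref{enmsol}. The only difference is cosmetic: you sketch the coarea/Cauchy--Schwarz argument for the refined P\'olya--Szeg\H{o} bound, whereas the paper simply cites \cite{Duff} and remarks that the factor $1/N^2$ emerges from the standard slicing proof once one uses $N(t)^2\ge N^2$ instead of $N(t)^2\ge1$.
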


\begin{proof}  Let $w^*\in H^1(\R^+)$ denote the decreasing rearrangement of $w$
on the positive halfline. Regardless of \eqref{hpN}, it is well known (see \cite{ast}) that
\[
\Vert w^*\Vert_{L^r(\R^+)} =
\Vert w\Vert_{L^r(\G)}
\quad \forall r\in [1,+\infty],\qquad
\Vert (w^*)'\Vert_{L^2(\R^+)} \leq
\Vert w'\Vert_{L^2(\G)}.
\]
If, however, one has \eqref{hpN} (which is nontrivial only if $N>1$), then the last inequality
takes the stronger form
\begin{equation}
\label{strongPS}
\Vert (w^*)'\Vert_{L^2(\R^+)}^2 \leq\frac 1 {N^2}
\Vert w'\Vert_{L^2(\G)}^2,
\end{equation}
as pointed out in \cite{Duff}. This, however,
follows immediately from a direct inspection of the proof of the Polya--Szeg\H{o}
inequality (see \cite{kawohl} or \cite{friedlander} for the metric graph setting),
which is based on a slicing argument: at each level $t$ in the range of $w$,
letting $N(t)=\# w^{-1}(t)$,
at a certain stage one uses
the minorization $N(t)^2\geq 1$
(which is trivial,  but optimal for a generic $w$).
In fact, the proof yields the sharp bound \eqref{strongPS},
if one exploits \eqref{hpN} and uses $N(t)^2\geq N^2$ for a.e. $t$.

Now define $v\in H^1(\R^+)$ by letting $v(x) = w^*(Nx)$, and observe that
\[
\Vert v\Vert_{L^p(\R^+)}^p
 = \frac1N
 \Vert w^*\Vert_{L^p(\R^+)}^p
= \frac1N
\Vert w\Vert_{L^p(\G)}^p,
\]
while
\[
\Vert v'\Vert_{L^2(\R^+)}^2
 = N
 \Vert (w^*)'\Vert_{L^2(\R^+)}^2
\leq \frac1N
\Vert w'\Vert_{L^2(\G)}^2.
\]
Therefore, we have
\begin{align*}
E(w,\G)
=\frac 1 2 \Vert w'\Vert_{L^2(\G)}^2-\frac 1 p
\Vert w\Vert_{L^p(\G)}^p
\geq
\frac N 2 \Vert v'\Vert_{L^2(\R^+)}^2-\frac N p
\Vert v\Vert_{L^p(\R^+)}^p=
N\cdot E(v,\R^+).
\end{align*}
On the other hand,
\[
\Vert v\Vert_{L^2(\R^+)}^2
 = \frac1N
 \Vert w^*\Vert_{L^2(\R^+)}^2
= \frac1N
\Vert w\Vert_{L^2(\G)}^2=\frac \nu N
\]
so that we can use \eqref{ensol} (with $\mu=\nu$) to estimate
\[
E(v,\R^+)\geq - 2^{2\beta}\theta_p \left(\frac\nu N\right)^{2\beta+1},
\]
and \eqref{toobig} is established.
\end{proof}

\begin{remark}
As suggested by the proof,  the bound in \eqref{toobig} can be written as $-N \theta_p 2^{2\beta} (\nu/N)^{2\beta+1}$.
  Due to  \eqref{enmsol}, we can interpret this quantity as $N$ times
 the energy of a half-soliton  of mass $\nu/N$ on $\R^+$. In particular, when $N=1$
 this is the energy level in \eqref{enmsol}, while when $N=2$ this is
 the energy level in \eqref{ensol}. When $N=3$ (or bigger), this is the lowest
 energy level achievable on the star-graph $S_N$ (the union of $N$ half-lines joined
 at the origin), among functions which respect the $N$-fold symmetry of the graph (see
 \cite{acfn2}).
\end{remark}

Now we
show that if a sequence of functions tends to vanish locally on $\G$,
while dispersing a certain mass $m$ at infinity along the halflines of $\G$,
then the limit energy level is not smaller
than the energy level of a soliton of mass $m$.

\begin{lemma}[energy loss at infinity]\label{eli} Let $\G$ be a noncompact metric graph,
and assume that a sequence $\{v_n\}\subset H^1(\G)$ satisfies
\[
v_n\to 0\quad\text{in $L^\infty_{\text{loc}}(\G)$,}\qquad
\lim_n \Vert v_n\Vert_{L^2(\G)}^2=m\in (0,+\infty).
\]
Then
\[
\liminf_n E(v_n,\G)\geq -\theta_p m^{2\beta+1}.
\]
\end{lemma}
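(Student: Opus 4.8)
The plan is to show that, after discarding a part of $\G$ where $v_n$ is uniformly small and funnelling all the escaping mass into a single star graph, the desired inequality becomes exactly the case $N=2$ of Lemma~\ref{ge3}, i.e.\ the statement that the energy is not below that of a soliton of mass $m$ on $\R$ (cf.\ \eqref{ensol}).

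First I would make two reductions. Since $E(|v_n|,\G)\le E(v_n,\G)$ and $|v_n|\to 0$ in $L^\infty_{\text{loc}}(\G)$ with the same $L^2$-norm as $v_n$, it suffices to prove the bound for $|v_n|$, so one may assume $v_n\ge 0$. Next, fix $R>0$ so large that $\G=\K\cup\HH$, where $\K$ is the compact subgraph made of all bounded edges of $\G$ together with the initial portions $[0,R]$ of its half-line edges, and $\HH=\G\setminus\K$ is the disjoint union of the $h\ge 1$ terminal half-lines, each isometric to $[R,+\infty)$ and attached to $\K$ at the point $R$. Writing $\delta_n:=\|v_n\|_{L^\infty(\K)}$, the hypothesis gives $\delta_n\to 0$; hence $\|v_n\|_{L^2(\K)}^2\le|\K|\,\delta_n^2\to 0$, so $\|v_n\|_{L^2(\HH)}^2\to m$, and moreover
\[
E(v_n,\K)\ \ge\ -\frac1p\,\|v_n\|_{L^\infty(\K)}^{p-2}\,\|v_n\|_{L^2(\K)}^2\ \ge\ -\frac1p\,\delta_n^{p-2}\,\|v_n\|_{L^2(\G)}^2\ \longrightarrow\ 0,
\]
because $p>2$ and the masses $\|v_n\|_{L^2(\G)}^2$ are bounded. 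Thus it remains to prove $\liminf_n E(v_n,\HH)\ge-\theta_p m^{2\beta+1}$.

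To estimate $E(v_n,\HH)$ I would eliminate the boundary values and glue the ends together. Let $c_{n,j}:=v_n(R_j)\in[0,\delta_n]$ be the value of $v_n$ at the $j$-th attaching point. On each half-line prepend a segment of length $1$ carrying the affine function going from the value $0$ to the value $c_{n,j}$, and then identify the $h$ new free endpoints into a single vertex: this produces a nonnegative function $\widehat v_n$ on the star graph $S_h$, continuous (in particular at the centre, where it vanishes, since all prepended endpoint values are $0$). The added pieces carry total $L^2$-mass $\le h\delta_n^2$, kinetic energy $\le\frac12 h\delta_n^2$ and $L^p$-norm controlled by $h\delta_n^p$, so
\[
\|\widehat v_n\|_{L^2(S_h)}^2=\|v_n\|_{L^2(\HH)}^2+O(\delta_n^2),\qquad
E(\widehat v_n,S_h)=E(v_n,\HH)+O(\delta_n^2),
\]
with both error terms vanishing as $n\to\infty$. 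Now on the edge of $S_h$ where $\widehat v_n$ attains its supremum, $\widehat v_n$ starts from $0$ at the centre and tends to $0$ at the infinite end (being in $H^1$), so by the intermediate value theorem every value $t\in(0,\|\widehat v_n\|_{L^\infty(S_h)})$ has at least two preimages; Lemma~\ref{ge3} applied with $N=2$ on the noncompact graph $S_h$ therefore gives
\[
E(\widehat v_n,S_h)\ \ge\ -\theta_p\bigl(\|\widehat v_n\|_{L^2(S_h)}^2\bigr)^{2\beta+1}.
\]
Combining this with $E(v_n,\G)=E(v_n,\K)+E(v_n,\HH)$ and the estimates above, and letting $n\to\infty$ (so that $\delta_n\to 0$, $\|v_n\|_{L^2(\HH)}^2\to m$, and the masses stay bounded), yields $\liminf_n E(v_n,\G)\ge-\theta_p m^{2\beta+1}$.

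The one genuinely delicate point is obtaining the \emph{sharp} constant $\theta_p$ rather than $2^{2\beta}\theta_p$: rearranging $v_n$ directly onto $\R^+$, or reflecting each terminal half-line separately onto a line, only produces the half-soliton bound $-2^{2\beta}\theta_p\nu^{2\beta+1}$. The improvement comes precisely from merging \emph{all} the terminal half-lines into one star graph, so that the rearranged profile is (at least) a double cover and the favourable case $N=2$ of Lemma~\ref{ge3} applies; the gluing also conveniently turns the a priori disconnected set $\HH$ into an admissible (connected) metric graph. The remaining work — controlling the prepended segments and the core energy — is routine, the only thing to keep in mind being that the $L^2$-masses are bounded, which is what makes $E(v_n,\K)$ negligible without any integrability issue on the unbounded part.
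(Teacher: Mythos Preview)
Your proof is correct and rests on the same core idea as the paper's, namely modifying $v_n$ by an $O(\delta_n)$ amount so that the resulting function vanishes somewhere, and then applying the case $N=2$ of Lemma~\ref{ge3}. The execution, however, is organized differently. The paper splits into two cases according to where $M_n=\max_\G v_n$ is attained: if $M_n$ is achieved in the interior of some halfline $\HH$, it inserts a short ``V'' of height $\delta_n=v_n(0)$ at the base of $\HH$ (so the modified function still lives on $\G$ and now vanishes at one interior point of $\HH$), while if $M_n$ is achieved on the compact core it simply observes $M_n\to 0$ and gets the stronger bound $E(v_n,\G)\ge o(1)$. You instead excise the whole compact core $\K$ (where the energy is $o(1)$ for the same reason), prepend an affine ramp to \emph{each} remaining half-line, and glue the free endpoints into a star graph $S_h$, so that the double-cover property holds automatically on whichever ray carries the maximum. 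Your route avoids the case distinction and is pleasantly uniform; the paper's route keeps the analysis on the original graph $\G$ and modifies a single edge rather than restructuring the whole space. Both arguments control the surgery by the same $\delta_n\to 0$ mechanism and land on the same application of Lemma~\ref{ge3}.
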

\begin{proof}
Replacing $v_n$ with $|v_n|$, we may assume that $v_n\geq 0$.
Then, setting $M_n:=\max_\G v_n$, two cases may occur according to where $M_n$ is attained.

\noindent\emph{(i)} If $M_n$ is attained in the interior of a halfline $\HH$
of $\G$, putting a coordinate $x\in [0,+\infty)$ on $\HH$, and setting $\delta_n=v_n(0)$,
we define on $\HH$ the modified function
\[
\widehat v_n(x)=\begin{cases}
  |x-\delta_n|&\text{if $x\in [0,2\delta_n]$,}\\
  v_n(x-2\delta_n) &\text{if $x>2\delta_n$,}
\end{cases}
\]
and observe that $\widehat v_n\in H^1(\HH)$ and $\widehat v_n(0)=v_n(0)=\delta_n$. Then we
set $\widehat v_n(x)=v_n(x)$ for $x\in\G\setminus \HH$, so that $\widehat v_n\in H^1(\G)$.
Now, considering that $\widehat v_n(\delta_n)=0$, that $\widehat v_n(x)\to 0$ as $x\to+\infty$ on $\HH$,
and that $\widehat v_n$ attains its maximum $M_n$ somewhere along $\HH$, we have
\[
\# \left\{ x\,|\,\, \widehat v_n(x)=t\right\}\geq 2\qquad
\forall t\in (0,M_n).
\]
Therefore, applying Lemma \ref{ge3} with $N=2$, we have
\[
E(\widehat v_n,\G)\geq
-\theta_p m_n^{2\beta+1},\quad
m_n:=\Vert \widehat v_n\Vert_{L^2(\G)}^2.
\]
But since
$v_n\to 0$ in $L^\infty_{\text{loc}}(\G)$, we have $\delta_n\to 0$, so that
$m_n=m+o(1)$ as $n\to\infty$. Similarly, also
$E(v_n,\G)=E(\widehat v_n,\G)+o(1)$,
so that the last inequality yields
\[
E(v_n,\G)\geq
-\theta_p m^{2\beta+1}+o(1)\qquad\text{as $n\to\infty.$}
\]
\noindent\emph{(ii)} If $M_n$ is \emph{not} attained in the interior of any halfline
of $\G$, then it is attained in the complementary set, which is
a compact set independent of $n$: since
$v_n\to 0$ in $L^\infty_{\text{loc}}(\G)$,  we must have $M_n\to 0$. But then a stronger
inequality holds, namely
\[
E(v_n,\G)\geq -\,\frac 1 p \int_\G |v_n(x)|^p\,dx
\geq -\,\frac {M_n^{p-2}} p \int_\G |v_n(x)|^2\,dx
\]
which is $o(1)$ as $n\to \infty$. Thus, the claim is proved in either case.
\end{proof}

\begin{theorem}\label{propgen}
Let $\{u_n\}\subset \Hmu(\G)$ be a sequence such that $u_n\weak u$ in $H^1(\G)$ and a.e. on $\G$, and
let
\begin{equation}
\label{defm}
m:=\mu-\Vert u\Vert_{L^2(\G)}^2\in [0,\mu]
\end{equation}
be the loss of mass in the limit.
Then, letting
$\Lambda:=\liminf_n E(u_n,\G)$,
one of the following  alternatives occurs, depending on the value of $m$:
\begin{itemize}
  \item[(i)] $m=0$. Then $u_n\to u$ \emph{strongly} in $L^2(\G)\cap L^p(\G)$, $u\in H^1_\mu(\G)$
  and $E(u,\G)\leq \Lambda$.
\item[(ii)] $0<m<\mu$. Then
\begin{equation}\label{stimaL}
\Lambda>\min \left\{  -\theta_p \mu m^{2\beta},\,
E(w,\G)\right\},
\end{equation}
  where $w\in H^1_\mu(\G)$ is the renormalized limit
\begin{equation}
\label{defw}
w(x)=\sqrt{\frac \mu{\mu-m}}\,\, u(x)\quad \forall x\in \G.
\end{equation}
\item[(iii)]  $m=\mu$. Then $u\equiv 0$ and $\Lambda\geq -\theta_p \mu^{2\beta+1}$.
\end{itemize}
\end{theorem}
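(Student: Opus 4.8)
The plan is to run a Brezis--Lieb type decomposition of the energy along the sequence. Set $v_n:=u_n-u$; since $u_n\weak u$ in $H^1(\G)$ and a.e., one has $v_n\weak 0$ in $H^1(\G)$, $v_n\to 0$ a.e. on $\G$, and, by Rellich compactness on each bounded subinterval of an edge together with the fact that $\G$ has only finitely many edges, also $v_n\to 0$ in $L^\infty_{\text{loc}}(\G)$. Expanding the $L^2$ norm and using weak convergence gives $\Vert v_n\Vert_{L^2(\G)}^2\to m$. Combining the Brezis--Lieb lemma for the $L^p$ term with the relation $\langle u',v_n'\rangle_{L^2(\G)}\to 0$ (which handles the kinetic term, since $v_n'\weak 0$ in $L^2$) one obtains the splitting
\[
E(u_n,\G)=E(u,\G)+E(v_n,\G)+o(1)\qquad\text{as }n\to\infty,
\]
so that $\Lambda=E(u,\G)+\liminf_n E(v_n,\G)$.

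Cases (i) and (iii) then follow quickly. If $m=0$ then $\Vert v_n\Vert_{L^2(\G)}\to 0$, hence $u_n\to u$ strongly in $L^2(\G)$ and $u\in\Hmu(\G)$; the Gagliardo--Nirenberg inequality \eqref{GN-universal} applied to $v_n$ (whose $H^1$ norm is bounded) gives $\Vert v_n\Vert_{L^p(\G)}\to 0$, hence strong convergence in $L^p(\G)$ as well, and then $E(u,\G)\le\Lambda$ follows from the weak lower semicontinuity of $u\mapsto\Vert u'\Vert_{L^2(\G)}^2$ together with the convergence of the $L^p$ term. If $m=\mu$ then $u\equiv 0$, and it suffices to apply Lemma~\ref{eli} directly to $u_n$ (which vanishes in $L^\infty_{\text{loc}}(\G)$ and has mass tending to $\mu$) to get $\Lambda\ge -\theta_p\mu^{2\beta+1}$.

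The heart of the matter is case (ii). Here Lemma~\ref{eli}, applied to $v_n$ (which vanishes in $L^\infty_{\text{loc}}(\G)$ and has mass tending to $m\in(0,+\infty)$), yields $\liminf_n E(v_n,\G)\ge-\theta_p m^{2\beta+1}$, whence
\[
\Lambda\ \ge\ E(u,\G)-\theta_p m^{2\beta+1}.
\]
It remains to show that the right-hand side is strictly larger than $\min\{-\theta_p\mu m^{2\beta},\,E(w,\G)\}$. Setting $\sigma:=\mu/(\mu-m)>1$, so that $w=\sqrt{\sigma}\,u$ by \eqref{defw}, an elementary computation gives the exact identity
\[
E(w,\G)=\sigma\,E(u,\G)-\frac{\sigma}{p}\bigl(\sigma^{p/2-1}-1\bigr)\Vert u\Vert_{L^p(\G)}^p .
\]
If $E(u,\G)>-\theta_p m^{2\beta}(\mu-m)$, then $E(u,\G)-\theta_p m^{2\beta+1}>-\theta_p\mu m^{2\beta}$ and we are done. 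Otherwise $E(u,\G)\le-\theta_p m^{2\beta}(\mu-m)<0$, which forces $\Vert u\Vert_{L^p(\G)}>0$; then, since $\sigma>1$ and $p>2$, the identity above gives $E(w,\G)<\sigma E(u,\G)<E(u,\G)$, hence
\[
E(u,\G)-E(w,\G)>(1-\sigma)E(u,\G)=\frac{m}{\mu-m}\bigl(-E(u,\G)\bigr)\ge\theta_p m^{2\beta+1},
\]
i.e.\ $E(u,\G)-\theta_p m^{2\beta+1}>E(w,\G)$. In either sub-case $\Lambda>\min\{-\theta_p\mu m^{2\beta},\,E(w,\G)\}$, as claimed.

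I expect the genuine difficulties to be organizational rather than conceptual: checking that $v_n\to 0$ in $L^\infty_{\text{loc}}(\G)$ so that Lemma~\ref{eli} really applies to $v_n$, verifying the hypotheses of the Brezis--Lieb lemma for the $L^p$ term, and---most delicately---managing the elementary but sign-sensitive algebra so that the two sub-cases of (ii) land precisely on the two terms of the minimum, each with a strict inequality. None of these steps is deep; the care lies in their combination.
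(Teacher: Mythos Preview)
Your proof is correct and follows essentially the same approach as the paper: Brezis--Lieb splitting of the energy, application of Lemma~\ref{eli} to $u_n-u$, and the rescaling $w=\sqrt{\mu/(\mu-m)}\,u$. The only difference is cosmetic---in case (ii) the paper avoids your sub-case split by writing the lower bound $E(u,\G)-\theta_p m^{2\beta+1}$ directly as the convex combination $\frac{m}{\mu}(-\theta_p\mu m^{2\beta})+(1-\frac{m}{\mu})E(w,\G)$ (using the strict inequality $E(w,\G)<\frac{\mu}{\mu-m}E(u,\G)$, valid since $u\not\equiv 0$), from which \eqref{stimaL} is immediate.
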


\begin{proof}
If $m=0$, then $u_n\to u$ strongly in $L^2(\G)$ and
therefore $u\in H^1_\mu(\G)$.
Moreover, since $u_n$ is bounded in $H^1(\G)$
hence in $L^\infty(\G)$, $u_n\to u$ strongly also in $L^p(\G)$. Then
$E(u,\G)\leq\Lambda$ by semicontinuity of the kinetic term.

If, on the other hand, $m=\mu$, then $u\equiv 0$ so that $u_n\to 0$ in
$L^\infty_{\text{loc}}(\G)$. In this case, $\Lambda\geq -\theta_p\mu^{2\beta+1}$
follows from Lemma \ref{eli}, applied with $v_n=u_n$ and $m=\mu$.

Finally, assume $0<m<\mu$. According to
the Brezis--Lieb Lemma (\cite{bl}) we may split
\begin{equation}
\label{s20}
E(u_n,\G) = E(u_n -u,\G) + E(u,\G) + o(1)\quad\text{as $n\to\infty$.}
\end{equation}
Now, from $u_n\weak u$ in $L^2(\G)$ we have
\[
\Vert u_n-u\Vert_{L^2(\G)}^2=
\Vert u_n\Vert_{L^2(\G)}^2+
\Vert u\Vert_{L^2(\G)}^2
-2 \langle u_n,u\rangle_{L^2(\G)}
\to \mu-\Vert u\Vert_{L^2(\G)}^2 =m
\]
as $n\to\infty$,
hence Lemma \ref{eli} applied with $v_n=u_n-u$ yields
\[
\liminf_n E(u_n-u,\G)\geq
-\theta_p m^{2\beta+1}.
\]
Thus, taking the liminf in \eqref{s20}, we find
\[
\Lambda\geq
-\theta_p m^{2\beta+1}
+E(u,\G).
\]
On the other hand, since $p>2$ and $u\not\equiv 0$, we have
\[
E(w,\G)
=\frac 1 2\,\frac\mu {\mu-m}
\Vert u'\Vert_{L^2(\G)}^2
-\frac 1 p
\left(\frac\mu {\mu-m}\right)^{\frac p 2}
\Vert u\Vert_{L^p(\G)}^p <
\frac\mu {\mu-m} E(u,\G)
\]
and thus from the previous inequality we obtain
\[
\Lambda >
-\theta_p m^{2\beta+1}
+\left(1-\frac m\mu\right) E(w,\G)=-\left(\frac m\mu\right)\theta_p \mu m^{2\beta}+
\left(1-\frac m\mu\right) E(w,\G),
\]
and \eqref{stimaL} follows.
\end{proof}

\begin{remark}\label{rem23}
  The typical application of the previous theorem is when $u_n$ is a minimizing
  sequence for some variational problem, i.e.
$
  \Lambda=\inf_{w \in V} E(w,\G)
$
  where $V\subseteq H^1_\mu(\G)$ is a certain class of functions of mass $\mu$.
  For instance, in \cite{ast2} it was proved that, when $V=H^1_\mu(\G)$, the condition
  $
  \Lambda < -\theta_p \mu^{2\beta+1}
$ (i.e. the strict inequality in the upper bound in \eqref{trapped2})
is sufficient for the existence of a ground state of mass $\mu$ on $\G$.
Now  Theorem~\ref{propgen} yields a short proof of this result. Indeed, the assumption
on $\Lambda$ immediately rules out case (iii), and \eqref{stimaL} is also impossible
since $w$ is an admissible competitor: then case (i) must occur, and the limit $u$ is a ground state.
\end{remark}

\section{Energy minimization with a localized maximum}

In this section we investigate the existence of a function with least energy, among
all functions of prescribed mass that achieve their maximum on a prescribed bounded edge
of $\G$.

Throughout,  $\G$ denotes a noncompact, connected metric graph, having at least
one bounded edge (if every edge of $\G$ is a halfline, the existence of
bound states has already been investigated in \cite{acfn2}).
We also denote by $e$ a \emph{fixed} bounded edge of $\G$, with vertices $\vv_1$ and
$\vv_2$, with the only assumption that $e$ is maximal,
i.e. $\deg(\vv_1)\not=2$ and $\deg(\vv_2)\not=2$ (there is no loss of generality
in this assumption, since any vertex $\vv$ of degree two can a priori be \emph{eliminated}
from any metric graph, by melting the two edges incident at $\vv$ into a single
edge). Observe
that the two conditions $\deg(\vv_1)=1$ and $\deg(\vv_2)=1$ cannot
be satisfied simultaneously,
otherwise
$e$ would be an isolated edge and $\G$, being connected, would be compact.
Thus, swapping if necessary $\vv_1$ with $\vv_2$, we can concretely
assume that
\begin{equation}
  \label{assvv} \deg(\vv_1)\not=2,\qquad
  \deg(\vv_2)\geq 3.
\end{equation}
If $\deg(\vv_1)=1$ then $e$ is called a \emph{terminal edge} of $\G$. Finally, we do not
exclude the case where $e$ forms a \emph{self loop}: in other words, it may well
happen that
$\vv_1=\vv_2$.

We shall focus our attention on the class of functions
\begin{equation}
\label{defV}
V=
\left\{u\in H^1(\G)\,:\;  \|u\|_{L^\infty(e)} = \|u\|_{L^\infty(\G)}\right\},
\end{equation}
characterized by the fact that their $L^\infty$ norm is achieved on the edge $e$,
and, for every $\mu>0$, on the subclass
\begin{equation}
\label{defVmu}
\Vmu = V \cap \Hmu(\G)
\end{equation}
of functions satisfying the additional mass constraint \eqref{mass}.

\begin{remark}\label{remVcl} The set $V$ is closed in the weak topology of $H^1(\G)$. Indeed,
if $u_n\in V$ and $u_n \weak u$ in $H^1(\G)$, then  by semicontinuity
\begin{align*}
\|u\|_{L^\infty(\G)} \leq
\liminf_{n}
\|u_n\|_{L^\infty(\G)}
=\liminf_{n}
\|u_n\|_{L^\infty(e)}=
\|u\|_{L^\infty(e)}
\end{align*}
(the last equality follows since $u_n \to u$ uniformly on $e$).
\end{remark}

We shall study the following minimization problem: find $u \in V_\mu$ such that
\begin{equation}
\label{minpV}
E(u,\G) = \inf_{v\in V_\mu}E(v,\G).
\end{equation}
The first crucial step is to estimate the infimum in \eqref{minpV} in terms
of $\mu$. Now we show that, for large $\mu$, this infimum is very close to
the ground-state energy level on $\R$ given by \eqref{ensol}. Moreover, if $e$
is a terminal edge, then the bound can be improved, and the infimum approaches
the ground-state energy level on $\R^+$ given by \eqref{enmsol}.

\begin{lemma}
\label{levelest}
For every $\varepsilon>0$ there exists $\mu_\varepsilon$ (depending only on $\eps$
and on the length of the edge $e$) such that, for all $\mu \ge \mu_\varepsilon$,
\begin{equation}
\label{quasisol}
\inf_{v\in \Vmu} E(v,\G) \le -\theta_p (1-\varepsilon)\mu^{2\beta+1}.
\end{equation}
Moreover, if $e$ is a terminal edge, then the previous inequality can be replaced with
\begin{equation}
\label{quasimezzosol}
\inf_{v\in \Vmu} E(v,\G) \le -\theta_p (1-\varepsilon)2^{2\beta}\mu^{2\beta+1}.
\end{equation}
\end{lemma}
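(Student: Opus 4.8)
The plan is to construct explicit competitors in $V_\mu$ by transplanting a (half-)soliton, or a large portion of it, onto the graph in such a way that the maximum falls on the fixed edge $e$. Let me describe the two cases.

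\textbf{Case of a terminal edge (proving \eqref{quasimezzosol}).} Here $\deg(\vv_1)=1$, so $e$ is an interval $[0,\ell]$ attached to the rest of $\G$ only at $\vv_2$ (at $x=\ell$, say), with the endpoint $x=0$ a degree-one vertex. The idea is to place a half-soliton of mass $\mu$ (the minimizer on $\R^+$, with energy $-2^{2\beta}\theta_p\mu^{2\beta+1}$ by \eqref{enmsol}) with its peak at $x=0$, the free endpoint of $e$. A half-soliton on $\R^+$ is decreasing, so its maximum is at the origin; we want to fit it into $\G$ starting from the tip of $e$ and running into the graph. The obstruction is that $e$ has finite length $\ell$, while the half-soliton is spread over all of $\R^+$; but since the half-soliton decays exponentially, for large $\mu$ it is concentrated on a scale $\sim\mu^{-\beta}\cdot(\text{const})$ which shrinks as $\mu\to\infty$, so the bulk of it fits inside $[0,\ell]$. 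More precisely, I would take the half-soliton of mass $\mu$, truncate it at a point $x_0\le\ell$ where its value is small, rescale/renormalize slightly to restore mass exactly $\mu$, and extend it by a tiny tail (or just zero) on $\G\setminus e$; alternatively run the remaining small tail along an edge incident to $\vv_2$. The resulting function lies in $V_\mu$ (its sup is attained at $x=0\in e$, since everything off $e$ is smaller), and its energy differs from $-2^{2\beta}\theta_p\mu^{2\beta+1}$ by a quantity that, after accounting for the renormalization factor $1+o(1)$ and the truncation error, is $\le \varepsilon\,\theta_p 2^{2\beta}\mu^{2\beta+1}$ once $\mu\ge\mu_\varepsilon$. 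This gives \eqref{quasimezzosol}.

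\textbf{General case (proving \eqref{quasisol}).} Now $\vv_1$ may have degree $\ge3$, so $e$ is a genuine internal edge $[0,\ell]$ with both endpoints attached to $\G$. Here I would transplant a \emph{whole} soliton of mass $\mu$ on $\R$ (energy $-\theta_p\mu^{2\beta+1}$ by \eqref{ensol}), which is symmetric and decreasing away from its center. The plan is to center the soliton at the midpoint $x=\ell/2$ of $e$: then the maximum is attained in the interior of $e$. Again the soliton is spread over all of $\R$, but concentrated on a shrinking scale for large $\mu$, so the central bump fits inside $[0,\ell]$; the two decaying tails, which are exponentially small, are attached to $\G$ at $\vv_1$ (through an edge incident there other than $e$) and at $\vv_2$. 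One must check that after splitting the tails among possibly several incident edges and renormalizing the mass, the maximum still sits on $e$ — this holds because the function is $\le$ its value at the cut points everywhere off $e$, by monotonicity of the soliton. The energy estimate is as before: $E\le -\theta_p(1+o(1))\mu^{2\beta+1}\le -\theta_p(1-\varepsilon)\mu^{2\beta+1}$ for $\mu\ge\mu_\varepsilon$, with $\mu_\varepsilon$ depending only on $\varepsilon$ and $\ell$ (through the requirement that the concentration scale be $\lesssim\ell$).

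\textbf{Main obstacle.} The routine parts are the energy bookkeeping (truncation error, the $(1+o(1))$ renormalization constant) — these are controlled by the explicit exponential decay and the scaling $u_\mu(x)=\mu^{\beta+1/2}\,\phi(\mu^\beta x)$ of solitons. The delicate point I expect to require care is ensuring the competitor genuinely lies in $V_\mu$, i.e. that the global maximum is attained \emph{on $e$} and not accidentally somewhere on the attached tails or at a branching vertex after the surgery: this forces the construction to cut the soliton at points where it is already below its value on all of $e$, and to route the leftover tails monotonically away from $\vv_1,\vv_2$ into $\G$ (using that there \emph{is} at least one other edge at $\vv_2$, and handling $\vv_1$ according to $\deg(\vv_1)=1$ or $\ge3$). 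Continuity at the gluing vertices must also be respected, which is why one matches function values rather than truncating abruptly. Once the surgery is set up so that the off-$e$ part is pointwise dominated by $\|u\|_{L^\infty(e)}$, membership in $V_\mu$ is immediate and the proof concludes.
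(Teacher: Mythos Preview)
Your approach is correct in spirit and would work, but the paper takes a cleaner route that sidesteps the surgery you describe. Instead of transplanting the actual soliton and then truncating, routing tails, and renormalizing the mass, the paper first approximates the \emph{unit-mass} soliton $\phi$ on $\R$ by a compactly supported function $\phi_\eps\in H^1(\R)$ with $\Vert\phi_\eps\Vert_{L^2}^2=1$ and $E(\phi_\eps,\R)\le -(1-\eps)\theta_p$, and only then applies the mass-scaling $v_\mu(x)=\mu^\alpha\phi_\eps(\mu^\beta x)$. Since the support of $v_\mu$ has diameter shrinking like $\mu^{-\beta}$, for $\mu$ large enough it fits entirely inside the edge $e$; one simply extends by zero on $\G\setminus e$. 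This gives an element of $V_\mu$ automatically (the function is identically zero off $e$), with mass exactly $\mu$ and energy exactly $\mu^{2\beta+1}E(\phi_\eps,\R)\le -(1-\eps)\theta_p\mu^{2\beta+1}$, with no renormalization and no continuity issues at the vertices. The terminal-edge case is handled by the same trick applied to $v_{2\mu}$ restricted to $\R^+$. What the paper's construction buys is that all the delicate points you flag --- continuity at gluing vertices, routing tails into possibly short incident edges, checking that the global maximum stays on $e$ after surgery, and the $(1+o(1))$ mass renormalization --- simply disappear. Your construction has the minor advantage of being more explicit (one could in principle track the constants), but at the cost of the extra bookkeeping you correctly identify as the main obstacle.
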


\begin{proof} Let $\phi\in H^1(\R)$ denote the soliton of unitary mass centered at the origin.
Since $\phi$
solves the minimization problem \eqref{ensol} when $\mu=1$,
$E(\phi,\R)=-\theta_p$ and hence,
given $\eps>0$, by standard density arguments there exists $\phi_\eps \in H^1(\R)$
with \emph{compact support}, such that
\[
E(\phi_\eps,\R)\leq -(1-\eps)\theta_p,\qquad \Vert \phi_\eps\Vert_{L^2(\R)}^2=1.
\]
Then, recalling \eqref{defbeta}, for $\mu>0$ we define the following rescaled version of $\phi_\eps$
\[
v_\mu(x)=\mu^\alpha \phi_\eps(\mu^\beta x),\quad x\in\R,\quad
\alpha=\frac 2{6-p}
\]
(this is the natural scaling for the NLS energy, see  Remark 2.2 in \cite{ast2}), and one can check that
\[
\Vert v_\mu \Vert_{L^2(\R)}^2=
\mu \Vert \phi_\eps\Vert_{L^2(\R)}^2=\mu,\qquad
E(v_\mu,\R)=\mu^{2\beta+1} E(\phi_\eps,\R)\leq -(1-\eps)\theta_p\mu^{2\beta+1}.
\]
Now, if  $\mu$ is large enough, the diameter of the support of $v_\mu$
becomes smaller than the length of the edge $e$: thus, we can fit $v_\mu$ on $e$
(so that it vanishes at $\vv_1$ and $\vv_2$)
and, setting it equal to zero on $\G\setminus e$, we may regard $v_\mu$ as a function
in $H^1_\mu(\G)$. This produces an admissible competitor in \eqref{quasisol} and, since
in the last inequality we can interpret $E(v_\mu,\R)$ as $E(v_\mu,\G)$, \eqref{quasisol}
is proved.

Finally, assume $e$ is a terminal edge (according to \eqref{assvv}, $e$ is attached to
$\G$ at $\vv_2$ while $\vv_1$ is the tip of the edge, with $\deg(\vv_1)=1$). The previous
argument remains valid, but a variant of it gives a better bound.
Indeed, now
one can start with $v_{2\mu}\in H^1(\R)$ instead
of $v_\mu$,
and exploit the fact that $v_{2\mu}(-x)=v_{2\mu}(x)$, so that on $\R^+$ one has
\[
\Vert v_{2\mu} \Vert_{L^2(\R^+)}^2=
\frac 1 2  \Vert v_{2\mu}\Vert_{L^2(\R)}^2=\mu,\qquad
E(v_{2\mu},\R^+)=\frac 1 2 E(v_{2\mu},\R)\leq -(1-\eps)\theta_p2^{2\beta}\mu^{2\beta+1}
\]
(the same bound as in \eqref{quasimezzosol}). Then, for large $\mu$, one can
\emph{restrict}  $v_{2\mu}$ to $\R^+$, fit it on $e$ so that it vanishes at $\vv_2$,
and set it equal to zero
on $\G\setminus e$ as before (but now $v_{2\mu}$, once fitted on $\G$, will satisfy a Neumann condition at $\vv_1$,
rather than  Dirichlet).

\end{proof}

\begin{theorem}
\label{minesiste}
There exists a mass threshold $\overline\mu$ such that,
for every $\mu\geq\overline\mu$, the minimization problem \eqref{minpV}
has a solution $u\in H^1_\mu(\G)$.
\end{theorem}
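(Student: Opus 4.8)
The plan is to run the direct method of the calculus of variations on the minimization problem \eqref{minpV}, using Theorem~\ref{propgen} to upgrade weak convergence of a minimizing sequence to the strong $L^2$ convergence needed to keep the mass constraint and stay inside the class $V_\mu$. First I would fix $\mu$ (how large to take it will emerge below) and let $\{u_n\}\subset V_\mu$ be a minimizing sequence for $E(\cdot,\G)$; replacing $u_n$ by $|u_n|$ changes neither the energy nor membership in $V_\mu$, so we may take $u_n\ge 0$. The Gagliardo--Nirenberg inequality \eqref{GN-universal} together with the mass bound \eqref{mass} shows that $E(u_n,\G)$ controls $\Vert u_n'\Vert_{L^2(\G)}^2$ from below once the kinetic term dominates (standard Young's inequality argument, using $p<6$), so $\{u_n\}$ is bounded in $H^1(\G)$. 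Passing to a subsequence, $u_n\weak u$ in $H^1(\G)$ and a.e. on $\G$, and $u\ge 0$.

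Next I would apply Theorem~\ref{propgen} to this sequence, with $\Lambda:=\liminf_n E(u_n,\G)=\inf_{v\in V_\mu}E(v,\G)$ and $m:=\mu-\Vert u\Vert_{L^2(\G)}^2$. The goal is to exclude cases (ii) and (iii) for $\mu$ large. Case (iii) ($m=\mu$, $u\equiv 0$) gives $\Lambda\ge -\theta_p\mu^{2\beta+1}$, whereas Lemma~\ref{levelest} gives $\Lambda\le -\theta_p(1-\eps)\mu^{2\beta+1}$; these contradict each other once $\eps$ is fixed small, independently of $\mu$, so (iii) cannot occur. For case (ii) ($0<m<\mu$), \eqref{stimaL} reads $\Lambda>\min\{-\theta_p\mu m^{2\beta},\,E(w,\G)\}$ where $w=\sqrt{\mu/(\mu-m)}\,u$. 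But $w\in V_\mu$: indeed $w$ is a positive multiple of $u$, and since $V$ is weakly closed (Remark~\ref{remVcl}) and the condition $\Vert u\Vert_{L^\infty(e)}=\Vert u\Vert_{L^\infty(\G)}$ is invariant under multiplication by a positive scalar, $u\in V$ hence $w\in V_\mu$; therefore $E(w,\G)\ge\Lambda$. So \eqref{stimaL} forces $\Lambda>-\theta_p\mu m^{2\beta}$. Combining with $\Lambda\le -\theta_p(1-\eps)\mu^{2\beta+1}$ yields $m^{2\beta}>(1-\eps)\mu^{2\beta}$, i.e. $m>(1-\eps)^{1/(2\beta)}\mu$, which (for $\eps$ small) says the lost mass $m$ is a definite fraction of $\mu$; combined with $m<\mu$ this is not yet a contradiction, so I need to be slightly more careful.

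The cleaner route, which I would actually carry out, is to note that when $0<m<\mu$ the weak limit $u\not\equiv 0$, and Theorem~\ref{propgen}(ii) gives the \emph{strict} inequality $\Lambda>-\theta_p\mu m^{2\beta}\ge -\theta_p\mu\cdot\mu^{2\beta}=-\theta_p\mu^{2\beta+1}$ when $m<\mu$ — wait, that goes the wrong way; instead one uses that $E(w,\G)\ge\Lambda$ turns \eqref{stimaL} into $\Lambda>-\theta_p\mu m^{2\beta}$, and then pairs it with the \emph{other} half of \eqref{stimaL}'s consequence, namely that $\Lambda\ge -\theta_p m^{2\beta+1}+(1-m/\mu)E(w,\G)$ from the proof, giving $(m/\mu)\Lambda>-(m/\mu)\theta_p\mu m^{2\beta}$ hence (dividing) $\Lambda>-\theta_p\mu m^{2\beta}$ again. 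So the decisive inequality is simply $\inf_{V_\mu}E>-\theta_p\mu m^{2\beta}$ with $m\ge(1-\eps)^{1/(2\beta)}\mu$ from the previous paragraph; plugging that lower bound on $m$ back in, $-\theta_p\mu m^{2\beta}\le -\theta_p(1-\eps)\mu^{2\beta+1}\ge\Lambda$, a contradiction with the strictness. Hence only case (i) survives: $u_n\to u$ strongly in $L^2(\G)\cap L^p(\G)$, $u\in H^1_\mu(\G)$, $u\in V$ (by Remark~\ref{remVcl}, since $V$ is weakly closed and $u_n\weak u$), so $u\in V_\mu$, and $E(u,\G)\le\Lambda=\inf_{V_\mu}E$, forcing equality. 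This $u$ solves \eqref{minpV}.

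The main obstacle is the bookkeeping in case (ii): one must verify that $w$ defined in \eqref{defw} is genuinely an admissible competitor in $V_\mu$ (this is where scale-invariance of the constraint and the weak closedness of $V$ from Remark~\ref{remVcl} are both used), and then chase the two estimates coming out of Theorem~\ref{propgen}'s proof carefully enough to see that, for $\mu$ above a threshold $\overline\mu$ determined only through the fixed small $\eps$ in Lemma~\ref{levelest} and the length of $e$, the lost mass $m$ would have to be simultaneously a large fraction of $\mu$ and strictly less than $\mu$ while also making $-\theta_p\mu m^{2\beta}$ at least $\Lambda$ — an impossibility. Everything else (the $H^1$ bound on the minimizing sequence, the reduction to nonnegative functions, passing to the weak limit, and closing the argument in case (i)) is routine.
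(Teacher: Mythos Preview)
Your argument has a genuine gap in both case~(iii) and case~(ii); in fact the same missing ingredient breaks both.

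In case~(iii) you claim that $\Lambda\ge -\theta_p\mu^{2\beta+1}$ and $\Lambda\le -\theta_p(1-\eps)\mu^{2\beta+1}$ are contradictory. They are not: for $\eps>0$ one has $-\theta_p\mu^{2\beta+1}<-\theta_p(1-\eps)\mu^{2\beta+1}$, so the two inequalities merely pin $\Lambda$ into the interval $[-\theta_p\mu^{2\beta+1},\,-\theta_p(1-\eps)\mu^{2\beta+1}]$. Lemma~\ref{levelest} only says that the infimum on $V_\mu$ is \emph{close} to the soliton level, not strictly below it; this cannot by itself exclude total vanishing.

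In case~(ii) you correctly obtain $\Lambda>-\theta_p\mu m^{2\beta}$ (using $w\in V_\mu$) and then, combining with Lemma~\ref{levelest}, deduce $m>(1-\eps)^{1/(2\beta)}\mu$. But at that point your argument becomes circular: plugging the lower bound on $m$ back into $-\theta_p\mu m^{2\beta}$ only reproduces the inequality you started from, and the chain ``$-\theta_p\mu m^{2\beta}\le -\theta_p(1-\eps)\mu^{2\beta+1}\ge\Lambda$'' does not yield $-\theta_p\mu m^{2\beta}\ge\Lambda$ (the inequalities point in opposite directions), so no contradiction with the strict inequality $\Lambda>-\theta_p\mu m^{2\beta}$ arises. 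You yourself flag that ``this is not yet a contradiction''; the subsequent paragraphs do not close the gap.

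What is missing is an \emph{independent upper bound} on the lost mass of the form $m\le(1-\delta_p)\mu$ for a universal $\delta_p>0$. The paper obtains this as follows: from $E(u_n,\G)\le -\tfrac12\theta_p\mu^{2\beta+1}$ one reads off both a kinetic bound $\Vert u_n'\Vert_{L^2}^2\le C_p\mu^{2\beta+1}$ and, crucially, a lower bound $\Vert u_n\Vert_{L^\infty(e)}^2\ge C_p\mu^{\beta+1}$ (use $\Vert u_n\Vert_{L^p}^p\le\mu\Vert u_n\Vert_{L^\infty(e)}^{p-2}$, which exploits $u_n\in V_\mu$). Since $u_n\to u$ uniformly on the compact edge $e$, this lower bound passes to $u$, giving in particular $u\not\equiv 0$ (ruling out~(iii)). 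Then the interpolation \eqref{interpol} yields
\[
C_p\mu^{\beta+1}\le\Vert u\Vert_{L^\infty}^2\le 2\Vert u\Vert_{L^2}\Vert u'\Vert_{L^2}\le C_p'\sqrt{\mu-m}\,\mu^{\beta+\frac12},
\]
hence $\mu-m\ge\delta_p\mu$. For $\eps$ small enough this contradicts $m>(1-\eps)^{1/(2\beta)}\mu$, and case~(ii) is excluded. Without this $L^\infty$-on-$e$ estimate (which is precisely where the extra constraint defining $V_\mu$ is used quantitatively), the dichotomy argument cannot be closed.
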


\begin{proof}
The mass threshold will be obtained from Lemma \ref{levelest}
letting $\overline\mu:=\mu_\eps$,
for a suitable choice of $\eps\in (0,1/2)$ to be specified later.
Now, given $\mu\geq\mu_\eps$,
let $\{u_n\}\subset V_\mu$ be a minimizing sequence for problem \eqref{minpV}, that is
\[
  \lim_{n\to\infty} E(u_n,\G)=\infV:=\inf_{v\in V_\mu} E(v,\G).
\]
Since $\eps<1/2$, \eqref{quasisol} entails $\Lambda<-\theta_p \mu^{2\beta+1}/2$, so we may assume
that $E(u_n,\G)\leq-\theta_p \mu^{2\beta+1}/2$ for every $n$, which, using \eqref{NLSe} and rearranging terms,
can be written as
\begin{equation}
  \label{sl34}
\frac{\theta_p}2 \mu^{2\beta+1} +
\frac 1 2 \Vert u_n'\Vert_{L^2(\G)}^2\leq\frac 1 p \Vert u_n\Vert_{L^p(\G)}^p.
\end{equation}
This will be used in two ways. First, ignoring $u_n'$ and using $u_n\in V_\mu$, we have
\[
\frac{p\theta_p}2 \mu^{2\beta+1}
\leq \Vert u_n\Vert_{L^p(\G)}^p\leq  \Vert u_n\Vert_{L^\infty(\G)}^{p-2}
\Vert u_n\Vert_{L^2(\G)}^2
= \mu \Vert u_n\Vert_{L^\infty(e)}^{p-2},
\]
which (recalling \eqref{defbeta}) we may record for future usage as
\begin{equation}
  \label{stimaLI}
  C_p \mu^{\beta+1}\leq \Vert u_n\Vert_{L^\infty(e)}^2.
\end{equation}
Second, if we drop the first summand in \eqref{sl34}, and use \eqref{GN-universal}
combined with $\Vert u_n\Vert_{L^2}^2=\mu$ to estimate the right hand side,
we obtain
\begin{equation}\label{stimaK}
\Vert u_n'\Vert_{L^2(\G)}^2\leq C_p \mu^{2\beta +1}.
\end{equation}
Therefore $\{u_n\}$ is bounded in $H^1(\G)$,
and we may assume that $u_n\weak u$ in $H^1(\G)$ and $u_n\to u$
pointwise a.e. on $\G$,
for some $u\in H^1(\G)$. Thus Theorem \ref{propgen}
applies, and we shall prove that case (i) occurs.

Since
$u_n\to u$ uniformly on the edge $e$, recalling \eqref{stimaLI} we have
\[
\Vert u\Vert_{L^\infty(\G)}^2
\geq
\Vert u\Vert_{L^\infty(e)}^2
=
\lim_n
\Vert u_n\Vert_{L^\infty(e)}^2
\geq
C_p \mu^{\beta+1}.
\]

In particular, $u\not\equiv 0$, and this (defining $m$ as in \eqref{defm})
rules out case (iii) of Theorem~\ref{propgen}.
Moreover, using \eqref{stimaLI}, \eqref{interpol}, \eqref{defm} and \eqref{stimaK}, we have
\begin{align*}
C_p \mu^{\beta+1} &\leq
\Vert u\Vert_{L^\infty(\G)}^2
\leq
2\Vert u\Vert_{L^2(\G)}
\Vert u'\Vert_{L^2(\G)}
=
2\sqrt{\mu-m\,}\,\,\Vert u'\Vert_{L^2(\G)}
\\
&\leq
2\sqrt{\mu-m\,}\,\,\liminf_n \Vert u_n'\Vert_{L^2(\G)}
\leq
C_p'\sqrt{\mu-m\,}
\,\, \mu^{\beta+\frac 1 2}.
\end{align*}
Thus we find $\mu-m\geq \delta_p \mu$ for some universal constant $\delta_p>0$,
that is
\begin{equation}
\label{stimam}
m\leq (1-\delta_p)\mu.
\end{equation}

Now we now show that \eqref{stimaL} is violated. Indeed,
since $u\in V$
by Remark \ref{remVcl}, the renormalized function $w$ defined in \eqref{defw} belongs to $V_\mu$,
and therefore $\Lambda\leq E(w,\G)$, so that \eqref{stimaL} simplifies to
$\Lambda> -\theta_p\mu \,m^{2\beta}$,
which combined with \eqref{quasisol} gives
\[
-\theta_p (1-\eps)\mu^{2\beta+1}>-\theta_p\mu \,m^{2\beta}.
\]
But this is incompatible with \eqref{stimam}, if $\eps$ is small enough.
Summing up, for small $\eps$,
we are in case (i) of Theorem~\ref{propgen}, that is, $u$
is a solution to problem \eqref{minpV}.
\end{proof}

\section{Bound states}

Now we prove that, for large $\mu$, every solution to the minimization
problem \eqref{minpV} achieves its maximum \emph{only} on the edge $e$, and
nowhere else.

\begin{remark}\label{rempos} If $u\in V_\mu$ is a solution to \eqref{minpV}, then
also $|u|$ is a solution, since $|u|\in V_\mu$ and $E(|u|,\G)=E(u,\G)$. In fact,
as we will prove
later, for large $\mu$ every solution is either strictly positive
o strictly negative on $\G$.
\end{remark}

\begin{lemma}
\label{three} Let $u \in \Vmu$ be a solution to \eqref{minpV}, found
according to Theorem \ref{minesiste}.
If $\mu$ is large enough (depending only on $p$ and on the graph $\G$), then
\begin{equation}
\label{toprove}
\|u\|_{L^\infty(e)} > \|u\|_{L^\infty(\G\setminus e)}.
\end{equation}
\end{lemma}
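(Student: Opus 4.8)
The strategy is to argue by contradiction: suppose that, along a sequence $\mu_n\to+\infty$, there are solutions $u_n\in V_{\mu_n}$ of \eqref{minpV} for which the maximum is \emph{not} strictly attained on $e$, i.e.\ $\|u_n\|_{L^\infty(e)}=\|u_n\|_{L^\infty(\G\setminus e)}$. Normalizing by the natural NLS scaling (setting $\widetilde u_n(x)=\mu_n^{-\alpha}u_n(\mu_n^{-\beta}x)$ on a correspondingly rescaled copy of $\G$, so that the rescaled mass is $1$ and the rescaled energy is $\mu_n^{-(2\beta+1)}E(u_n,\G)$) should turn the estimates of Theorem~\ref{minesiste} — namely \eqref{stimaLI}, \eqref{stimaK} and the lower bound $\Lambda<-\theta_p(1-\eps)\mu_n^{2\beta+1}$ from \eqref{quasisol} — into uniform bounds: the rescaled functions have unit mass, bounded $H^1$ norm, $L^\infty$ norm bounded below by $\sqrt{C_p}$, and energy approaching the soliton level $-\theta_p$ (or the half-soliton level $-2^{2\beta}\theta_p$ if $e$ is terminal), by combining Lemma~\ref{levelest} with the universal bound \eqref{trapped2}. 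Meanwhile, the length of the rescaled edge $e$ is $\mu_n^{\beta}\ell_e\to+\infty$.

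The heart of the argument is then a concentration/profile analysis: I would show that the rescaled minimizers concentrate on the (now very long) edge $e$ and converge, in a suitable local sense, to the soliton $\phi$ on $\R$ (or the half-soliton on $\R^+$ when $e$ is terminal). The key point is that $u_n$ is a \emph{minimizer} achieving an energy essentially equal to the soliton level, which is the universal lower bound; by the rigidity in Theorem~\ref{propgen} (case (i) must occur, exactly as in the proof of Theorem~\ref{minesiste}) the rescaled sequence cannot lose mass, so it converges strongly in $L^2\cap L^p$ to a unit-mass function $\widetilde u$ realizing the soliton energy on $\G$ — but the only such functions live on a single unbounded edge or on $\R^+$, and by uniqueness of the soliton $\widetilde u$ is (a translate/reflection of) $\phi$, supported on $e$, strictly decreasing away from its single peak and strictly smaller than its maximum on $\G\setminus e$. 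The contradiction comes from the assumption $\|u_n\|_{L^\infty(e)}=\|u_n\|_{L^\infty(\G\setminus e)}$: since $u_n\to\phi$ uniformly on compact sets and $\phi$ attains its maximum only in the interior of $e$ while being uniformly small outside a large ball, the equality of the two sup-norms forces a second ``bump'' of $u_n$ on $\G\setminus e$ carrying a non-negligible amount of mass and energy — either this bump persists, contradicting strong $L^2$ convergence to a function with no mass off $e$, or it must vanish, contradicting the equality of the sup-norms (using that on $e$ the peak value is bounded below by $\sqrt{C_p}\,\mu_n^{(\beta+1)/2}$ after unscaling, cf.\ \eqref{stimaLI}, so the value on $\G\setminus e$ cannot go to the ``small'' regime).

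The main obstacle, and the step requiring the most care, is making the convergence of the rescaled minimizers to the soliton rigorous on a \emph{variable} domain: the graph $\G$ rescales to graphs $\mu_n^\beta\G$ whose bounded edges all stretch out, and one must control where the peak of $u_n$ sits (it could a priori drift toward a vertex of $e$ or sit on a different bounded edge) and rule out mass escaping along the half-lines or splitting among several edges. This is handled by the energy ordering: any such splitting would, via Lemma~\ref{ge3} and Lemma~\ref{eli}, push the energy strictly above the soliton level, contradicting minimality together with Lemma~\ref{levelest}; in particular a function with two well-separated bumps on $\G$ has at least (roughly) a doubled preimage count over a range of levels, so Lemma~\ref{ge3} with $N=2$ gives energy $\geq -\theta_p(\nu/2)^{2\beta+1}\cdot 2 > -\theta_p \nu^{2\beta+1}$ for masses $\nu$ of order $\mu_n$ — strictly worse than the single-soliton competitor of Lemma~\ref{levelest}. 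Once the single-bump, soliton-shaped profile is established, \eqref{toprove} is immediate because $\phi$ (and the half-soliton) attain their max at an interior point of $e$ and are strictly below that maximum everywhere else.
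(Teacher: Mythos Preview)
Your route is genuinely different from the paper's, and as written it has a real gap.

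The paper does \emph{not} rescale or pass to a limit. Assuming for contradiction that $M:=\|u\|_{L^\infty(e)}$ is also attained on some other edge $f$, it counts preimages of $u$ directly. Setting $\delta:=\max_{h\text{ bounded}}\min_h u$, the paper shows that every level $t\in(\delta,M)$ has at least \emph{three} preimages when $e$ is non-terminal (one on $e$, one on $f$, and a third obtained by following a branch at $\vv_2$ distinct from both $e$ and $f$), and at least two when $e$ is terminal. To push the count down to $t\in(0,M)$ it attaches two auxiliary edges of length $\lambda$ at a point where $u=\delta$, extending $u$ linearly to zero; this perturbs the energy by at most $\delta^2/\lambda$ and the mass by at most $\lambda\delta^2$, and $\delta^2\le\mu/\ell$ is controlled by the shortest edge length. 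Lemma~\ref{ge3} with $N=3$ then gives $E(u,\G)\ge -\theta_p(2/3)^{2\beta}\mu^{2\beta+1}+o(\mu^{2\beta+1})$, which for large $\mu$ contradicts $E(u,\G)\le -(1-\eps)\theta_p\mu^{2\beta+1}$ from \eqref{quasisol}. In the terminal case $N=2$ suffices because the upper bound improves to \eqref{quasimezzosol}.

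Your blow-up plan misses exactly this quantitative point. The line ``Lemma~\ref{ge3} with $N=2$ gives energy $\ge -\theta_p(\nu/2)^{2\beta+1}\cdot 2$'' is incorrect: with $N=2$ the lemma yields $E\ge -\theta_p\nu^{2\beta+1}$, which is the soliton level itself and gives \emph{no} strict gap against the competitor in \eqref{quasisol} when $e$ is non-terminal. That is precisely why the paper needs the $N=3$ count there, an observation absent from your sketch. Beyond this, the concentration--compactness step on the varying graphs $\mu_n^\beta\G$ is, as you say, the hard part, and nothing in the paper supplies it (Theorem~\ref{propgen} is for a fixed graph); you would have to build a profile decomposition on stretching graphs, localize the peak, and then quantify ``strictly smaller off $e$'' uniformly in $n$, since the limiting soliton is nowhere zero. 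All of this is plausible but substantially heavier than the paper's two-page counting argument, and your sketch does not close the non-terminal case without the three-preimage insight.
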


\begin{proof} Considering $|u|$ instead of $u$,
by Remark~\ref{rempos} we may assume that $u\geq 0$ (note that
\eqref{toprove} depends only on $|u|$).
Let $M= \|u\|_{L^\infty(e)}$. Since $u\in\Vmu$, we clearly have
$M \ge \|u\|_{L^\infty(\G\setminus e)}$.
To prove the claim, we assume that
\begin{equation}
\label{assurda}
M= \|u\|_{L^\infty(\G\setminus e)}
\end{equation}
and we obtain a contradiction, if $\mu$ is large enough.
To start with, we certainly assume $\mu\geq \overline{\mu}$ (the mass threshold
of Theorem~\ref{minesiste}) but, throughout the proof, a possibly larger mass threshold
will arise.

The proof is divided into two steps.

\medskip

\newcommand\BB{\mathcal B}
\noindent\emph{Step 1). } Denoting by $\BB$ the set of all the bounded edges of $\G$, and
letting
\begin{equation}
\label{defdelta}
\delta:=\max_{h\in \BB} \,\min_{x\in h} u(x)\qquad\quad (0\leq\delta\leq M),
\end{equation}
we will prove a nontrivial lower bound on the number of preimages in $u^{-1}(t)$, for every value
$t\in (\delta,M)$ achieved by $u$ on $\G$. The precise lower bound depends
on whether $e$ is a terminal edge or not. More precisely, recalling \eqref{assvv}, we will show that
\begin{equation}
  \label{caso1}
\deg (\vv_1)=1\qquad\Rightarrow\qquad
\#\left\{x\in\G\,|\,\,u(x)=t\right\} \ge 2 \quad\text{for every  $t\in (\delta,M)$,}
\end{equation}
while
\begin{equation}
  \label{caso3}
\deg (\vv_1)\geq 3\qquad\Rightarrow\qquad
\#\left\{x\in\G\,|\,\,u(x)=t\right\} \ge 3 \quad\text{for every  $t\in (\delta,M)$}
\end{equation}
We point out that $\delta\geq 0$ since $u\geq 0$. Moreover we may assume $\delta<M$, otherwise the claims are trivial.

Now, \eqref{assurda} means  that $u$ (which already attains its maximum $M$ on $e$)
achieves the value $M$
also
on \emph{another}
edge $f$ (possibly at $\vv_1$ or $\vv_2$, if $e$ and $f$ share a vertex).
Moreover, since $u$ is continuous, in each edge $g$ (bounded or not) where $u$ attains $M$,
$u$ also attains \emph{every intermediate value} $t\in (\delta,M)$: indeed, if $g$ is a bounded edge
this follows immediately from \eqref{defdelta}, while if $g$ is a halfline, then $u(x)\to 0$
as $x$ tends to infinity along $g$.
In particular, for every $t\in (\delta,M)$, $u$
has at least two preimages on $\G$ (one on $e$ and one on $f$), and \eqref{caso1} is
established (regardless of $\deg(\vv_1)$).

Now assume that  $e$ is not a terminal edge, i.e.
$\deg (\vv_1) \ge 3$, and consider two points $x_1,x_2\in e$ with
$u(x_1)=\delta$ and $u(x_2)=M$.
By swapping (if necessary) $\vv_1$ and $\vv_2$,
we may assume that, on the edge $e$,
$x_2$ lies between $x_1$ and
$\vv_2$, as shown in the picture (of course, we do not exclude that $x_2=\vv_2$
or $x_1=\vv_1$).

\begin{figure}
\begin{tikzpicture}[xscale= 0.7,yscale=0.7] 
\node at (-2,2) [nodo] (-22) {};
\node at (-1,-1) [nodo] (-11) {};
\node at (0,0) [nodo] (00) {};
\node at (7,0) [nodo] (30) {};
\node at (8,3) [nodo] (43) {};
\node at (10,-2) [nodo] (-62) {};
\node at (10,1.5) [nodo] (72) {};
\node at (16,1.5) [nodo] (92) {};

\draw[-] (-22)--(00);
\draw[-] (-11)--(00);
\draw[-] (00)--(30);
\draw[-] (30)--(43);
\draw[-] (30)--(-62);
\draw[-] (72)--(92);

\node at (9,1.5) [infinito]  (x) {};
\node at (17,1.5) [infinito]  (y) {};
\draw[dashed] (x)--(72);
\draw[dashed] (92)--(y);

\node at (8.5,4.5) [infinito] (ad) {};
\draw[dashed] (43)--(ad);

\node at (-3,3) [infinito] (as) {};
\draw[dashed] (00)--(as);

\node at (-2,-2) [infinito] (bs) {};
\draw[dashed] (00)--(bs);

\node at (10.9,-2.6) [infinito] (bd) {};
\draw[dashed] (30)--(bd);

\draw [-]  (5.5,-.2)--(5.5,.2);
\node at (5.5,-.5) [infinito] (yy) {$u=M$};
\node at (5.5,.5) [infinito]  {$x_2$};

\draw [-]  (1.5,-.2)--(1.5,.2);
\node at (1.5,.5) [infinito]  {$x_1$};
\node at (1.5,-.5) [infinito] (M1) {$u=\delta$};
\draw [-]  (11.5,1.3)--(11.5,1.7);
\node at (11.5,1.0) [infinito] (zz) {$u=\delta$};

\draw [-]  (14.5,1.3)--(14.5,1.7);
\node at (14.5,1.0) [infinito] (M2) {$u=M$};

\node at (3.5,.5) [infinito]  (e) {$e$};
\node at (13,2) [infinito]  (effe) {$f$};
\node at (7.2,1.6) [infinito]  (a) {$a$};
\node at (8.6,-.7) [infinito]  (b) {$b$};
\node at (.4,.3) [infinito]  (A) {$\vv_1$};
\node at (6.6,.3) [infinito]  (B) {$\vv_2$};
\end{tikzpicture}
\end{figure}
Let $a,b$ denote two edges (other than $e$) emanating from $\vv_2$:
if $a\not=f$ then, for every value $t\in (\delta,M)$, a \emph{third preimage}
can be found (in addition to those already found on $e$ in the interval $[x_1,x_2]$ and,
similarly, on $f$)
by travelling on $e$ from $x_2$ (where $u=M$) to $\vv_2$ and,
if $u(\vv_2)>t$, travelling further along $a$, until $u=t$
(if $a$ is a bounded edge then, somewhere on $a$, $u=t$ by \eqref{defdelta}, while
if $a$ is a halfline, then $u$ achieves $t$ since
$u(\vv_2)>t$ while $u\to 0$ at infinity along $a$). Of course, the same can be done if $b\not=f$.
Finally, if $f=a=b$, then $f$ is necessarily a bounded edge, that
forms a self-loop attached at $\vv_2$: since on this loop $u$ attains the values
$M$ and $\delta$, every intermediate value $t\in (\delta,M)$ is attained  \emph{at least twice}
on $f$, so that in this case a third preimage can be found on $f$ itself.
Therefore, also \eqref{caso3} is established.

\medskip

\newcommand\Gd{\G'}

\noindent\emph{Step 2) } Now, in order to apply Lemma \ref{ge3}, we would like
the inequalities in \eqref{caso1} and \eqref{caso3} to be valid for every $t\in (0,M)$,
without the restriction $t>\delta$. This can be achieved by \emph{extending}  $u$
to a graph $\Gd$ larger than $\G$, as follows (if by any chance $\delta=0$, then no extension is
necessary: our argument, however, formally works also when $\delta=0$).

Given $\lambda>0$ (to be specified later), let $\Gd$ be the graph obtained from $\G$, with the addition
of \emph{two extra edges} $e_1$ and $e_2$, both of length $\lambda$, and both
with one endpoint attached to $\G$ at $x_1$ (recall $x_1\in e$ and $u(x_1)=\delta$).
Putting a coordinate $s\in [0,\lambda]$ on each $e_i$
(so that $e_i$ is attached to $x_1$
at $s=0$), we extend $u$ to $\Gd$ letting
\[
u(s)=\frac\delta\lambda(\lambda-s),\quad \forall s\in e_1\sim [0,\lambda],\quad
\forall s\in e_2\sim [0,\lambda].
\]
Clearly, $u\in H^1(\Gd)$. Moreover, $u$ now achieves every value $t\in (0,\delta)$
no less than three times on $\Gd$: once on $e_1$, once on $e_2$, and at least once on the original
graph $\G$ (since $u\to 0$ along every halfline of $\G$). This, combined with \eqref{caso1}
and \eqref{caso3}, gives
\begin{align*}
\deg (\vv_1)=1\qquad\Rightarrow\qquad
\#\left\{x\in\Gd\,|\,\, u(x)=t\right\} \ge 2 \quad\text{for almost every  $t\in (0,M)$,}
\\
\deg (\vv_1)\geq 3\qquad\Rightarrow\qquad
\#\left\{x\in\Gd\,|\,\, u(x)=t\right\} \ge 3 \quad\text{for almost every  $t\in (0,M)$.}
\end{align*}
Now Lemma \eqref{ge3} can be applied to $u$  on $\Gd$  in either case
(choosing $N=2$ or $N=3$ according to $\deg(\vv_1)$),
thus obtaining
\begin{align}
  \label{c1}
\deg (\vv_1)=1\qquad&\Rightarrow\qquad
E( u,\Gd)\geq -\theta_p \,\nu^{2\beta+1},
\\
  \label{c3}
\deg (\vv_1)\geq 3\qquad&\Rightarrow\qquad
E(u,\Gd)\geq -\theta_p \left(\frac 2 3\right)^{2\beta}\nu^{2\beta+1}
\end{align}
where
\begin{equation}
\label{stimanu}
\nu:=\Vert u\Vert_{L^2(\Gd)}^2
=
\Vert u\Vert_{L^2(\G)}^2
+
2\frac {\delta^2}{\lambda^2}\int_0^\lambda (\lambda-s)^2\,ds
\leq
\mu+\lambda\delta^2.
\end{equation}
Moreover,
\[
E(u,\Gd)=
E(u,\G)+2 \, E(u,e_1) \leq E(u,\G)+2 \times \frac 1 2 \Vert u'\Vert_{L^2(e_1)}^2=
E(u,\G)+\frac {\delta^2}\lambda,
\]
so that $E(u,\G)\geq E(u,\Gd)-\delta^2/\lambda$. Plugging this inequality into
\eqref{c1} and \eqref{c3}, and then using
\eqref{stimanu}, yields
\begin{align}
  \label{cc1}
\deg (\vv_1)=1\qquad&\Rightarrow\qquad
E( u,\G)\geq -\theta_p \,(\mu+\lambda\delta^2)^{2\beta+1}-\frac{\delta^2}{\lambda},
\\
  \label{cc3}
\deg (\vv_1)\geq 3\qquad&\Rightarrow\qquad
E(u,\G)\geq -\theta_p \left(\frac 2 3\right)^{2\beta}
(\mu+\lambda\delta^2)^{2\beta+1}-\frac{\delta^2}{\lambda}.
\end{align}
Moreover, since $E(u,\G)$ coincides with the infimum in \eqref{quasisol} (and in
\eqref{quasimezzosol}), for every small $\eps>0$ (assuming that $\mu\geq \mu_\eps$)
we can use Lemma \eqref{levelest} to estimate $E(u,\G)$ from above, and plug
\eqref{quasisol} and \eqref{quasimezzosol}, respectively, into \eqref{cc3} and \eqref{cc1}.
Reversing signs, this gives
\begin{align*}
\deg (\vv_1)=1\qquad&\Rightarrow\qquad
\theta_p (1-\varepsilon)2^{2\beta}\mu^{2\beta+1}
\leq \theta_p \,(\mu+\lambda\delta^2)^{2\beta+1}+\frac{\delta^2}{\lambda},
\\
\deg (\vv_1)\geq 3\qquad&\Rightarrow\qquad
\theta_p (1-\varepsilon)\mu^{2\beta+1}
\leq \theta_p \left(\frac 2 3\right)^{2\beta}
(\mu+\lambda\delta^2)^{2\beta+1}+\frac{\delta^2}{\lambda}.
\end{align*}
Of these two estimates, the latter (being weaker) is valid in either
case: therefore, from now on, we will focus on the latter, regardless of $\deg(\vv_1)$.
Now, to estimate $\delta$, observe that for every bounded edge $h$ of $\G$
we have
\[
\left(\min_{x\in h} u(x)\right)^2 \leq
\frac 1 {\mathop{\rm length}(h)} \int_h u(x)^2\,dx\leq
\frac \mu\ell,
\]
where $\ell>0$
is the length of the shortest edge of $\G$ (i.e. a constant that depends
only on $\G$). Therefore, from \eqref{defdelta} we have
$\delta^2\leq \mu/\ell$, and hence
\[
\theta_p (1-\varepsilon)\mu^{2\beta+1}
\leq \theta_p \left(\frac 2 3\right)^{2\beta}
\mu^{2\beta+1}(1+\lambda/\ell)^{2\beta+1}+\frac{\mu}{\lambda\ell}.
\]
Now, regardless of $\mu$, we can fix $\eps$ and $\lambda$ small enough,
so that the coefficient of $\mu^{2\beta+1}$ on the left is \emph{strictly bigger}
than the corresponding coefficient on the right: then, a contradiction is
easily found
if $\mu$ is large enough.
\end{proof}

As a consequence of this lemma, we can prove that any solution $u\in V_\mu$
to \eqref{minpV} is in fact a bound state.

\begin{theorem}
\label{mainteo}
Let $u \in \Vmu$ be a solution to \eqref{minpV}, found
according to Theorem \ref{minesiste}.
If $\mu$ is large enough (depending only on $p$ and on the graph $\G$), then
$u$ is a bound state of mass $\mu$, i.e. $u$ satisfies \eqref{mass}
and \eqref{euler} for a suitable $\lambda>0$.
Moreover, either $u>0$ or $u<0$ on $\G$.
\end{theorem}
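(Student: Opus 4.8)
The plan is to exploit Lemma~\ref{three} to show that the extra constraint in \eqref{defV} is \emph{locally inactive} at $u$, so that $u$ turns out to be a critical point of $E$ on the mass sphere \eqref{Hmu} alone; the Euler equation \eqref{euler} then follows from the classical Lagrange--multiplier computation, exactly as if $u$ were a ground state. First I would reduce to the case $u\geq0$: by Remark~\ref{rempos} the function $|u|$ is again a solution of \eqref{minpV}, and \eqref{toprove} involves only $|u|$, so there is no loss in assuming $u\geq0$ (the dichotomy $u>0$ or $u<0$ for the \emph{original} minimizer will be recovered at the end). Two facts will be used repeatedly: since $u$ realizes $\inf_{V_\mu}E$, Lemma~\ref{levelest} gives $E(u,\G)<0$, hence, by \eqref{NLSe} and $p>2$,
\be
\label{pos-Lp}
\|u\|_{L^p(\G)}^p>\tfrac p2\,\|u'\|_{L^2(\G)}^2\geq\|u'\|_{L^2(\G)}^2;
\ee
and, for $\mu$ large, Lemma~\ref{three} gives $M:=\|u\|_{L^\infty(e)}>\|u\|_{L^\infty(\G\setminus e)}=:M'$.

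Next, fix an arbitrary $\eta\in H^1(\G)$. Since $\|u\|_{L^2(\G)}^2=\mu>0$, for $|t|$ small I would form the renormalized family
\[
w_t:=\sqrt{\mu}\,\frac{u+t\eta}{\|u+t\eta\|_{L^2(\G)}}\in H^1_\mu(\G),
\]
which satisfies $w_t\to u$ in $H^1(\G)$, hence in $L^\infty(\G)$ by \eqref{interpol}, as $t\to0$. Consequently, for $|t|$ small,
\[
\|w_t\|_{L^\infty(e)}\geq M-\|w_t-u\|_{L^\infty(\G)}>M'+\|w_t-u\|_{L^\infty(\G)}\geq\|w_t\|_{L^\infty(\G\setminus e)},
\]
so that $\|w_t\|_{L^\infty(e)}=\|w_t\|_{L^\infty(\G)}$, i.e. $w_t\in V\cap H^1_\mu(\G)=V_\mu$. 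Therefore $t\mapsto E(w_t,\G)$ attains a local minimum at $t=0$. This map is of class $C^1$ near $t=0$: the normalization factor $\sqrt{\mu}/\|u+t\eta\|_{L^2(\G)}$ is smooth because $\|u\|_{L^2(\G)}^2=\mu>0$, while $t\mapsto\|u+t\eta\|_{L^p(\G)}^p$ is $C^1$ since $s\mapsto|s|^p$ is (being $p>2$), the relevant difference quotients being dominated in $L^1(\G)$ by $(|u|+|\eta|)^p\in L^1(\G)$. Hence its derivative vanishes at $t=0$, and a straightforward computation of this derivative yields
\[
\int_\G\bigl(-u'\eta'+u|u|^{p-2}\eta\bigr)\dx=\lambda\int_\G u\eta\dx,\qquad
\lambda:=\frac1\mu\Bigl(\|u\|_{L^p(\G)}^p-\|u'\|_{L^2(\G)}^2\Bigr).
\]
Since $\eta\in H^1(\G)$ was arbitrary, $u$ satisfies \eqref{euler}, hence \eqref{euleroforte} on every edge together with the Kirchhoff condition \eqref{kir}; and $\lambda>0$ by \eqref{pos-Lp}.

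It remains to prove the sign alternative. The nonnegative function $u=|u|$ solves $u''=\lambda u-u^{p-1}$ on each edge. Suppose $u$ vanishes at some point $x_0\in\G$. If $x_0$ lies in the interior of an edge, then it is a minimum point of $u\geq0$, so $u'(x_0)=0$ as well, and uniqueness for this ODE forces $u\equiv0$ on that edge. If $x_0=\vv$ is a vertex, then along each edge $e_i$ incident at $\vv$ the function $u\geq0$ has a minimum at $\vv$, so $\tfrac{du}{dx_{e_i}}(\vv)\geq0$; by \eqref{kir} these nonnegative numbers sum to zero, hence each vanishes, so $u$ and its derivative vanish at an endpoint of each $e_i$ and, as before, $u\equiv0$ on each such edge. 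In either case, since $\G$ is connected, iterating the argument gives $u\equiv0$ on $\G$, contradicting $\|u\|_{L^2(\G)}^2=\mu>0$. Hence $u>0$ everywhere on $\G$. Returning to the original minimizer (before taking the absolute value): it is continuous and, by what we have just shown, never zero on the connected graph $\G$; thus it is either strictly positive or strictly negative, and in either case it still satisfies \eqref{euler} (equivalently \eqref{euleroforte}--\eqref{kir}) with the same $\lambda>0$.

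The one genuinely delicate point is the \emph{local inactivity} of the constraint in the second paragraph, namely the passage from $w_t$ being close to $u$ in $H^1(\G)$ to $w_t\in V$: this is precisely where Lemma~\ref{three} enters, since it is the \emph{strict} inequality $\|u\|_{L^\infty(e)}>\|u\|_{L^\infty(\G\setminus e)}$ that persists under small $L^\infty$-perturbations and thus makes arbitrary mass-preserving variations admissible. A minor additional care is needed for the positivity at the branching vertices, where the Kirchhoff condition \eqref{kir} replaces the ODE-uniqueness argument used in the interior of the edges.
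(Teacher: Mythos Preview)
Your proof is correct and follows the same core strategy as the paper: Lemma~\ref{three} places $u$ in the $H^1$--interior of the constraint set $V$, so $u$ is a local minimizer on the mass sphere $H^1_\mu(\G)$, and the Euler equation follows from a standard mass--preserving variation. Two details are handled differently. For $\lambda>0$, the paper instead observes that on any halfline of $\G$ the restriction of $u$ must be an $L^2$ solution of \eqref{euleroforte}, hence a piece of a soliton, which forces $\lambda>0$; your route via $E(u,\G)<0$ and the explicit formula $\lambda=\mu^{-1}(\|u\|_{L^p(\G)}^p-\|u'\|_{L^2(\G)}^2)$ is equally valid and more self--contained. For strict positivity of $|u|$, the paper simply cites \cite{ast}, whereas you spell out the underlying ODE--uniqueness argument together with the Kirchhoff condition at the vertices; this is precisely the content of that citation. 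One small presentational point: having passed to $|u|$ at the outset, your closing claim that the original minimizer ``still satisfies \eqref{euler}'' relies implicitly on the oddness of \eqref{euleroforte} in $u$ (so that $-|u|$ solves it whenever $|u|$ does); alternatively, your variation with $w_t$ applies verbatim to the original, possibly sign--changing, minimizer, since the strict inequality in Lemma~\ref{three} depends only on $|u|$.
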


\begin{proof} Since $u\in \Vmu\subset H^1_\mu(\G)$, \eqref{mass}
is obvious.
Moreover, by Lemma \ref{three}, we may rely on \eqref{toprove} which,
due to the \emph{strict} inequality,  is stable
under perturbations of $u$ that are sufficiently small in the $L^\infty$ norm
on $\G$. But since $H^1(\G)\hookrightarrow L^\infty(\G)$, we see that
\eqref{toprove} is stable also under small perturbations of $u$
in the $H^1$ norm: in other words, recalling \eqref{defV} and \eqref{defVmu},
$u$ lies in the \emph{interior} of $V$,
so that $u$ is not only a global minimizer in $V_\mu$, but also a local minimizer
in $H^1_\mu(\G)$. Then $u$ is a critical point of the NLS energy subject
to \eqref{mass}, and \eqref{euler} (with $\lambda$ as a Lagrange multiplier)
follows by
standard arguments.

The fact that $\lambda>0$ is due to the non-compactness of $\G$ (i.e. to the presence
of at least one halfline): indeed, it is well known that any solution of
\eqref{euleroforte} in $L^2(\R^+)$
is (the restriction of) a  soliton, which forces $\lambda>0$.

Finally, by Remark~\ref{rempos}, all we have proved for $u$ is valid also for $|u|$,
including the analogues of \eqref{euleroforte} and \eqref{kir}. Then (since $|u|$ is
not identically zero) $|u|>0$ can be proved exactly as in \cite{ast} (see (iii) of
Proposition~3.3). This shows that $u$ does not vanish on $\G$, therefore either
$u>0$ or $u<0$ since $\G$ is connected.
\end{proof}

\begin{corollary}\label{coroll}
  Let $\G$ be a noncompact metric graph, having $k$ bounded edges.
  If $\mu$ is large enough, then $\G$ admits at least $k$ positive
  bound states of mass $\mu$.
\end{corollary}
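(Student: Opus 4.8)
The plan is to apply Theorem~\ref{mainteo} once for each bounded edge of $\G$, and then argue that the bound states so obtained are genuinely distinct. First I would enumerate the bounded edges as $e^{(1)},\dots,e^{(k)}$; if some of them have a vertex of degree two, I would first perform the harmless reduction mentioned before \eqref{assvv}, melting such vertices, so that each $e^{(j)}$ may be assumed maximal (this does not change the topological count of bounded edges in an essential way, but to be safe one should phrase the corollary in terms of the reduced graph, or simply note that the construction below can be carried out for each of the original $k$ edges as well). For each fixed $j$, running the machinery of Section~3 and Section~4 with $e=e^{(j)}$ produces a mass threshold $\mu_j$ (depending only on $p$, on $\G$, and on the length of $e^{(j)}$) and, for every $\mu\ge\mu_j$, a positive bound state $u^{(j)}\in H^1_\mu(\G)$ which, by Lemma~\ref{three}, satisfies
\[
\|u^{(j)}\|_{L^\infty(e^{(j)})}>\|u^{(j)}\|_{L^\infty(\G\setminus e^{(j)})}.
\]
Taking $\overline\mu:=\max_{1\le j\le k}\mu_j$, for every $\mu\ge\overline\mu$ we obtain $k$ positive bound states $u^{(1)},\dots,u^{(k)}$ of mass $\mu$.

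It remains to see that these $k$ bound states are pairwise distinct, and this is the only step requiring a small argument. Suppose $u^{(i)}=u^{(j)}$ for some $i\ne j$; call this common function $u$. Then on one hand $\|u\|_{L^\infty(e^{(i)})}>\|u\|_{L^\infty(\G\setminus e^{(i)})}\ge\|u\|_{L^\infty(e^{(j)})}$, since $e^{(j)}$ is one of the edges making up $\G\setminus e^{(i)}$ (here I use $i\ne j$, so $e^{(j)}\subset\G\setminus e^{(i)}$). On the other hand, by the same token with the roles of $i$ and $j$ exchanged, $\|u\|_{L^\infty(e^{(j)})}>\|u\|_{L^\infty(e^{(i)})}$. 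These two strict inequalities are contradictory, so $u^{(i)}\ne u^{(j)}$. Hence the $k$ bound states are distinct, which proves the corollary.

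I do not expect any genuine obstacle here: the existence part is a verbatim invocation of Theorems~\ref{minesiste} and~\ref{mainteo}, and the distinctness is the short $L^\infty$-comparison argument above, whose engine is precisely the \emph{strict} inequality \eqref{toprove} supplied by Lemma~\ref{three}. The only point deserving care is bookkeeping about vertices of degree two and the resulting notion of ``bounded edge'': one should make sure the statement is read for maximal edges (or, equivalently, observe that the construction localizing a soliton of small support can be performed inside any sufficiently long bounded portion of $\G$, so that the count $k$ in the corollary can be taken to be the number of bounded edges of $\G$ after the degree-two reduction). With that understood, the proof is immediate.
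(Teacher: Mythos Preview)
Your proposal is correct and follows essentially the same approach as the paper: apply Theorem~\ref{mainteo} once for each bounded edge, take the maximum of the resulting mass thresholds, and use the strict inequality \eqref{toprove} from Lemma~\ref{three} to conclude that the resulting bound states are pairwise distinct. The paper's own proof is just a terse version of yours; the only small point you glossed over is that Theorem~\ref{mainteo} gives either $u>0$ or $u<0$, so one should say ``replacing $u^{(j)}$ by $-u^{(j)}$ if necessary'' to obtain positivity.
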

\begin{proof}
Since the edge $e$ (that determines the set $V$ defined in \eqref{defV})
can be chosen in $k$ different ways, for large $\mu$ one obtains $k$ bound
states (and replacing each $u$ with $-u$, we may assume that each of them is positive).
Since each of them satisfies \eqref{toprove} relative
to the corresponding edge $e$, we see that these $k$ bound states are, in all
respects, distinct.
\end{proof}

\section{Some examples}
In this section we discuss some examples
of metric graphs, in order to illustrate how our results can be concretely applied.

In the pictures, the unbounded edges (halflines) of the graph can be
recognized at a glance, since their terminal edge 
(the ``point at infinity''
of the halfline) is labeled  with the symbol ``$\infty$''.
The bullet symbol ``$\bullet$'', in contrast, is used to
label the ordinary vertices of the
graph.

\begin{example}\label{example1} The following graph

\begin{center}
\begin{tikzpicture}
\node at (0,0) [nodo] (1) {};
\node at (-1.5,0) [infinito]  (2){\infvert};
\node at (1,0) [nodo] (3) {};
\node at (0,1) [nodo] (4) {};
\node at (-1.5,1) [infinito] (5) {\infvert};
\node at (2,0) [nodo] (6) {};
\node at (3,0) [nodo] (7) {};
\node at (2,1) [nodo] (8) {};
\node at (3,1) [nodo] (9) {};
\node at (4.5,0) [infinito] (10) {\infvert};
\node at (5.5,0) [infinito] (11) {\infvert};
\node at (4.5,1) [infinito] (12) {\infvert};
\draw [-] (1) -- (2) ;
\draw [-] (1) -- (3);
\draw [-] (1) -- (4);
\draw [-] (3) -- (4);
\draw [-] (5) -- (4);
\draw [-] (3) -- (6);
\draw [-] (6) -- (7);
\draw [-] (6) to [out=-40,in=-140] (7);
\draw [-] (3) to [out=10,in=-35] (1.4,0.7);
\draw [-] (1.4,0.7) to [out=145,in=100] (3);
\draw [-] (6) to [out=40,in=140] (7);
\draw [-] (6) -- (8);
\draw [-] (6) to [out=130,in=-130] (8);
\draw [-] (7) -- (8);
\draw [-] (8) -- (9);
\draw [-] (7) -- (9);
\draw [-] (9) -- (12);
\draw [-] (7) -- (10);
\draw [-] (7) to [out=40,in=140] (11);
\end{tikzpicture}
\end{center}
consists of 5 unbounded edges (halflines) and 13 bounded edges.
Therefore, according to Corollary~\ref{coroll}, for $\mu$ large enough
there at least 13 (positive) bound states of mass $\mu$.

On the other hand,  it follows from the results of \cite{ast}
(see in particular Theorem~2.5 and the remark after Theorem~2.3 therein)
that, regardless of $\mu$, this graph admits \emph{no ground state}:
as a consequence, none of these 13 bound states  is
a ground state.

More generally (replacing 13 with the actual number of bounded edges)
this applies to any graph $\G$  satisfying the so
called ``assumption (H)'' introduced in \cite{ast}, a topological assumption which
rules out ground states of any prescribed mass.
\end{example}

\begin{example}\label{example2} The following graph

\begin{center}
\begin{tikzpicture}
\node at (0,0) [nodo] (1) {};
\node at (-1.5,0) [infinito]  (2){\infvert};
\node at (1,0) [nodo] (3) {};
\node at (0,1) [nodo] (4) {};
\node at (-1.5,1) [infinito] (5) {\infvert};
\node at (2,0) [nodo] (6) {};
\node at (3,0) [nodo] (7) {};
\node at (2,1) [nodo] (8) {};
\node at (3,1) [nodo] (9) {};
\node at (4.5,0) [infinito] (10) {\infvert};
\node at (5.5,0) [infinito] (11) {\infvert};
\node at (4.5,1) [infinito] (12) {\infvert};
\node at (0,2) [nodo] (13) {};
\draw [-] (4) -- node[right] {$\scriptstyle f$} (13) ;
\draw [-] (1) -- (2) ;
\draw [-] (1) -- (3);
\draw [-] (1) -- (4);
\draw [-] (3) -- (4);
\draw [-] (5) -- (4);
\draw [-] (3) -- (6);
\draw [-] (6) -- (7);
\draw [-] (6) to [out=-40,in=-140] (7);
\draw [-] (3) to [out=10,in=-35] (1.4,0.7);
\draw [-] (1.4,0.7) to [out=145,in=100] (3);
\draw [-] (6) to [out=40,in=140] (7);
\draw [-] (6) -- (8);
\draw [-] (6) to [out=130,in=-130] (8);
\draw [-] (7) -- (8);
\draw [-] (8) -- (9);
\draw [-] (7) -- (9);
\draw [-] (9) -- (12);
\draw [-] (7) -- (10);
\draw [-] (7) to [out=40,in=140] (11);
\end{tikzpicture}
\end{center}
is obtained from the graph of Example~\ref{example1} by adding one \emph{terminal} edge
(the one labelled ``$f$'' in the picture), so that
there are 14 bounded edges and therefore, according to
Corollary~\ref{coroll}, for $\mu$ large enough
$\G$ has at least 14 bound states of mass $\mu$.
Here, however, in contrast with Example~\ref{example1},
for large $\mu$ the presence of a terminal edge guarantees the existence
of a ground state of mass $\mu$
(see Proposition~4.1 of \cite{ast2}).

Now let us denote by $\{u_e\}$ the 14 bound states (one for every
bounded edge $e$ of $\G$) provided
by Corollary~\ref{coroll}, and recall that $u_e$ achieves its maximum on the edge $e$.

With this notation we claim that, among the bound states $\{u_e\}$, \emph{at most one}
can be a ground state,
the unique candidate being $u_f$ (the one that achieves its maximum
on the terminal edge $f$).

Indeed,  when $\mu$ is large  we can apply
Lemma~\ref{levelest} with $e=f$: since $f$ is a terminal edge and $u_f$ achieves the infimum in
\eqref{quasimezzosol}, in particular we obtain that $E(u_f,\G)<-\theta_p \mu^{2\beta+1}$.

Now consider any bounded edge $e\not=f$, and pick a point $x_0\in e$ where $u_e$ achieves its maximum.
 Since $x_0\not\in f$, exploiting the topology of $\G$ (more precisely,
of $\G\setminus f$) and the
fact $u_e(x)\to 0$ as $x\to \infty$
along any halfline, the reader can easily check that
\eqref{hpN} is satisfied if $w=u_e$ and $N=2$.
Then \eqref{toobig} gives
$E(u_e,\G)\geq -\theta_p \mu^{2\beta+1}$, so that $E(u_e,\G)>E(u_f,\G)$ and hence
$u_e$ cannot be a ground state.
\end{example}

\begin{example}\label{example3}  Consider the graph

\begin{center}
\begin{tikzpicture}[xscale= 0.7,yscale=0.7]
\node at (-2,0) [infinito]  (0) {\infvert};
\node at (9,0) [infinito]  (100) {\infvert};
\node at (3.5,0) [nodo] (1) {};
\node at (3.5,1.6) [nodo] (2) {};
\draw [-] (0) -- (1);
\draw [-] (100) -- (1);
\draw(3.5,.8) circle (.8);
\draw(3.5,2.2) circle (.6);
\node at (2.3,.8) {$\scriptstyle f$};
\node at (4.7,.8) {$\scriptstyle g$};
\node at (2.5,2.2) {$\scriptstyle e$};
\end{tikzpicture}
\end{center}
with two halflines and three bounded edges $e$, $f$ and $g$ (note that $e$ forms a self-loop,
while $f$ and $g$ are assumed to have the same length).

For large $\mu$,
Corollary~\ref{coroll} yields three positive bound states $u_e$, $u_f$ and $u_g$,
each achieving its maximum on the corresponding edge. Of these bound states, however, only $u_e$
is a ground state: indeed, it is known (see Example~2.4 in \cite{ast}) that
for every mass $\mu$  this graph has a unique positive ground state, obtained by
fitting a soliton of mass $\mu$ on $\G$ with the maximum in the middle of the edge $e$
(thus respecting the symmetry of the graph). Therefore, in this graph,  the
minimization problem \eqref{minpV} (whose solution is of course $u_e$) is equivalent
to the minimization of the NLS energy with the mass constraint only, since the
additional requirement that $u$ achieves its maximum on $e$ is a \emph{natural constraint},
already satisfied by the ground state. More generally, these considerations
remain valid for all the graphs discussed
in Example~2.4 of \cite{ast}.
\end{example}

\begin{example}\label{example4} The graph

\begin{center}
\begin{tikzpicture}[xscale= 0.7,yscale=0.7]
\node at (-6,0) [infinito]  (0) {\infvert};
\node at (6,0) [infinito]  (100) {\infvert};
\node at (-2,0) [nodo] (1) {};
\node at (2,0) [nodo] (2) {};
\node at (-2,2) [nodo] (3) {};
\node at (2,2) [nodo] (4) {};
\draw [-] (0) -- (1) -- node[above] {$\scriptstyle f$} (2) -- (100);
\draw [-] (1) -- node[left] {$\scriptstyle e$}(3);
\draw [-] (2) -- node[right] {$\scriptstyle g$}(4);
\end{tikzpicture}
\end{center}
consists of two halflines and three bounded edges $e$, $f$ and $g$, hence for large $\mu$ one can find three bound states
$u_e$, $u_f$ and $u_g$,
each achieving its maximum on the corresponding edge.
We claim that, if $e$ and $g$ have the same length, then  both $u_e$ and $e_g$ are ground states,
while $u_f$ is not.

Indeed, since the two edges $e$ and $g$ are terminal,
for large $\mu$ there is at least one
ground state of mass $\mu$, with an energy level
\emph{lower} than the soliton threshold given by \eqref{ensol}
(see \cite{ast2}, Proposition~4.1).

On the other hand, since $u_f$ achieves its maximum on $f$ and $f$ is attached to a halfline at each
endpoint,  it is clear that \eqref{hpN} holds true with $w=u_f$ and $N=2$: then, we
see from \eqref{toobig} that
$E(u_f,\G)$ cannot be smaller than the energy threshold in \eqref{ensol}, so that $u_f$ cannot be
a ground state. Moreover, the same applies to any nonnegative function $w\in H^1_\mu(\G)$ that
achieves its maximum on a halfline of $\G$, so that any ground state must necessarily
achieve its maximum on $e$ or on $g$. Since $u_e$ (resp. $u_g$) has least energy among all functions
in $H^1_\mu(\G)$ with the maximum on $e$ (resp. on $g$), we see that one of them is a ground state:
finally, by the symmetry of the graph, $E(u_e,\G)=E(u_g,\G)$, so that both functions are a ground state.
\end{example}

\end{document}